\newtheorem{theorem}{Theorem}
\newtheorem{corollary}[theorem]{Corollary}
\newtheorem{lemma}[theorem]{Lemma}
\newtheorem{remark}[theorem]{Remark}
\newtheorem{definition}[theorem]{Definition}
\newcommand{\N}{\mathbb{N}}
\newcommand{\cont}{\mathcal C}
\newcommand{\surj}{\mathcal S}
\newcommand{\seq}[1]{\langle #1\rangle}
\newcommand{\chainacc}[1]{\mathcal{CA}(#1)}
\begin{document}

\title{Shadowing, recurrence, and rigidity in dynamical systems}

\author[J. Meddaugh]{Jonathan Meddaugh}
\address[J. Meddaugh]{Baylor University, Waco TX, 76798}
\email[J. Meddaugh]{Jonathan\_Meddaugh@baylor.edu}

\subjclass[2020]{37B65, 37B45}
\keywords{shadowing, pseudo-orbit tracing property, topological dynamics}

\begin{abstract} In this paper we examine the interplay between recurrence properties and the shadowing property in dynamical systems on compact metric spaces. In particular, we demonstrate that if the dynamical system $(X,f)$ has shadowing, then it is recurrent if and only if it is minimal. Furthermore, we show that a uniformly rigid system $(X,f)$ has shadowing if and only if $X$ is totally disconnected and use this to demonstrate the existence of a space $X$ for which no surjective system $(X,f)$ has shadowing. We further refine these results to discuss the dynamics that can occur in spaces with compact space of self-maps.
\end{abstract}

\maketitle

\section{Introduction}

A dynamical system is a pair $(X,f)$ where $X$ is a compact metric space and $f:X\to X$ is a continuous function. It is well-known that every dynamical system has at least one \emph{recurrent point}---a point $z$ such that for every neighborhood $U$ of $z$, the orbit of $z$ meets $U$ infinitely often. Recurrent points are well-studied objects, with connections to many other dynamical notions.

Katznelson and Weiss began the study of systems in which \emph{every} point is recurrent (called recurrent systems) and demonstrated that there exist transitive non-minimal recurrent systems \cite{KatznelsonWeiss}. Informed by analogous measure-theoretic notions, Glasner and Maon introduced stronger notions of recurrence---$n$-rigidity, weak rigidity, rigidity, and uniform rigidity (in increasing order of strength)---and showed that rigid systems have zero topological entropy \cite{GlasnerMaon}. Korner shows that rigidity and uniform rigidity are distinct notions, even in the restricted context of minimal systems \cite{Korner}. However, in \cite{DonosoShao:uniformlyrigidmodels}, Donoso and Shao demonstrate that the class of uniformly rigid systems is rich enough to model all non-periodic ergodic rigid systems.

In the following sections, we will explore the implications of the various forms of recurrence in systems with shadowing. Informally, a dynamical system $(X,f)$ has the \emph{shadowing property} provided that approximate orbits (pseudo-orbits) are well-approximated by true orbits of the system. Shadowing (also known as the \emph{pseudo-orbit tracing property}) is naturally important in computation \cite{Corless, Pearson} but has also been the subject of significant research in its own right, beginning with Bowen's analysis of non-wandering sets for Axiom A diffeomorphisms \cite{Bowen}.  The shadowing property has wide-reaching applications in the analysis of dynamical systems, especially in analyzing stability \cite{Pil, robinson-stability, walters} and in characterizing $\omega$-limit sets \cite{Bowen,  MR}. Despite being a relatively strong property, shadowing is a generic property in the space of dynamical systems on many spaces \cite{BMR-Dendrites, Mazur-Oprocha, Meddaugh-Genericity, Pilyugin-Plam}.

The main results of this paper are the following theorems.

{
	\renewcommand*{\thetheorem}{\ref{recurrentminimal}}
	\addtocounter{theorem}{-1}
\begin{corollary}
	Let $X$ be connected and let $(X,f)$ be a system with shadowing. Then $(X,f)$ is minimal if and only if it is recurrent.
\end{corollary}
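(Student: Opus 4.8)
The backward implication is soft and requires neither shadowing nor connectedness: in a minimal system every point is uniformly recurrent (its set of return times to any neighborhood is syndetic, by the Gottschalk--Hedlund/Birkhoff theory), hence in particular recurrent. I would dispose of this in a single sentence and spend all the effort on the forward implication, that recurrence forces minimality.

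For recurrent $\Rightarrow$ minimal the plan is to argue by contradiction. Suppose $(X,f)$ is recurrent but not minimal, and fix a minimal subset $M\subsetneq X$ (which exists by Zorn's lemma); since $X$ is connected and $M$ is a proper, nonempty, closed, invariant set, $M$ is not open. I would first record the two easy consequences of recurrence that I need: every point is chain recurrent, so $CR(f)=X$, and $f$ is surjective. The heart of the argument is then the following construction. Pick $a$ with $d(a,M)=r>0$, apply shadowing to $\epsilon=r/3$ to get $\delta$, and build a $\delta$-pseudo-orbit that first traverses a $\delta$-chain from $a$ to some $m\in M$ and thereafter follows the genuine orbit $m,f(m),f^2(m),\dots$ inside $M$ forever. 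If $x$ shadows this pseudo-orbit to precision $r/3$, then $d(x,a)<r/3$ gives $d(x,M)>2r/3$, while $d(f^n(x),M)<r/3$ for every sufficiently large $n$; hence $d(f^n(x),x)>r/3$ eventually, so $f^n(x)\notin B(x,r/3)$ for all large $n$ and $x$ is not recurrent. This contradicts the hypothesis that every point is recurrent, and the contradiction establishes minimality.

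The step that makes this work, and which I expect to be the main obstacle, is producing the $\delta$-chain from the far point $a$ into $M$: this is precisely the assertion that $(X,f)$ is chain transitive, i.e.\ that $X$ is a single chain component, equivalently that $M$ is chain-reachable from points at definite distance from it. I would extract chain transitivity from connectedness together with shadowing. The chain components are closed and partition $X$ (using $CR(f)=X$), and connectedness forbids any nontrivial clopen splitting, so it suffices to show that under shadowing distinct chain components cannot accumulate on one another—forcing each component to be open and leaving only one. Here connectedness is genuinely essential: the identity on a Cantor set has shadowing and is recurrent with every chain component a non-open singleton, so without connectedness the decomposition into minimal chain components need not collapse. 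I expect the delicacy to lie exactly in this upgrade, since the naive chain surgery that splices a nearby point onto a return chain only works at scales larger than the distance between the points, and one must invoke shadowing to promote approximate, large-scale reachability to genuine small-scale chain reachability; once chain transitivity is secured, the non-recurrent-point construction above closes the argument.
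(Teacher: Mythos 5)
Your endgame is sound: once you have, for every $\delta>0$, a $\delta$-chain from the far point $a$ into the minimal set $M$, extending it by the true orbit inside $M$ and shadowing at precision $r/3$ does produce a non-recurrent point, and this is essentially the paper's Theorem \ref{chainacc} run in contrapositive. The genuine gap is that you never actually produce that $\delta$-chain, and the route you sketch for it is the wrong tool. Openness of chain components is not a consequence of shadowing (your own example, the identity on the Cantor set, shows the components can be non-open singletons that accumulate), and the only leverage you propose for ruling this out on a connected space is connectedness itself --- which, to be used, requires you to already know that each chain class contains the connected component of each of its points. That is precisely the statement you are trying to establish, so as written the argument is circular at its crucial step.

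The fix is elementary and uses neither shadowing nor any openness claim; it is the paper's Lemma \ref{equiv}. Your objection that ``the naive chain surgery only works at scales larger than the distance between the points'' is correct for a \emph{single} splice from $a$ directly to $m$, but interpolation removes it entirely: a compact connected metric space is $\eta$-chain-connected in the metric sense for every $\eta>0$ (Nadler, 4.23), so choose points $a=x_0,x_1,\dots,x_n=m$ with $d(x_j,x_{j+1})<\delta/2$, use recurrence at each $x_j$ to find $M_j$ with $d(f^{M_j}(x_j),x_j)<\delta/2$, and concatenate the orbit segments $x_j,f(x_j),\dots,f^{M_j-1}(x_j)$; the error at each junction is $d(f^{M_j}(x_j),x_{j+1})\leq d(f^{M_j}(x_j),x_j)+d(x_j,x_{j+1})<\delta$. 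This shows that in any recurrent system the connected component of $p$ lies in $\chainacc{p}$, with shadowing playing no role until the shadowing-and-recurrence contradiction you already have. (The paper goes slightly further, using the reversibility of chains from Lemma \ref{reversingchains} to make chain accessibility an equivalence relation and conclude via Corollary \ref{recurrentanalogue} that $X$ decomposes into minimal sets respecting components; for the connected case your contradiction scheme suffices once the chain is supplied.)
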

}

{
	\renewcommand*{\thetheorem}{\ref{rigidsystem}}
	\addtocounter{theorem}{-1}
	\begin{theorem} 
		A uniformly rigid system $(X,f)$ has shadowing if and only if $X$ is totally disconnected.
	\end{theorem}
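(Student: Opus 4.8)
The plan is to prove both implications, and throughout I would use that uniform rigidity forces $f$ to be a homeomorphism: if $f^{n_k}\to\mathrm{id}$ uniformly with $n_k\to\infty$, then $f$ is surjective (given $y$, a convergent subsequence of $f^{n_k-1}(y)$ has image $y$) and injective (from $f(a)=f(b)$ we get $f^{n_k}(a)=f^{n_k}(b)$, so $a=b$ in the limit). This reduces the analysis to homeomorphisms whose high iterates lie uniformly close to the identity.

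First I would establish the easier direction, that total disconnectedness implies shadowing. Fix $\epsilon>0$. Since $X$ is compact and totally disconnected, choose a finite clopen partition $\mathcal P=\{P_1,\dots,P_m\}$ of mesh less than $\epsilon$, and set $\eta=\min_{i\neq j}d(P_i,P_j)>0$. By uniform rigidity pick $N$ with $\sup_x d(f^N(x),x)<\eta$; then $f^N(x)$ lies in the same cell as $x$ for every $x$, so $f^N(P_i)\subseteq P_i$, and since $f^N$ is a bijection, $f^N(P_i)=P_i$ for all $i$. Passing to the common refinement $\mathcal Q=\bigvee_{j=0}^{N-1}f^{-j}(\mathcal P)$ and using $f^{-N}(\mathcal P)=\mathcal P$, one checks that $f^{-1}(\mathcal Q)=\mathcal Q$, so $f$ permutes the (clopen, mesh $<\epsilon$) cells of $\mathcal Q$ by some permutation $\pi$. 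Now if $\delta$ is smaller than the gap between distinct cells of $\mathcal Q$, any $\delta$-pseudo-orbit $(x_i)$ has $x_{i+1}$ in the same cell as $f(x_i)$; hence the cell itinerary of $(x_i)$ is exactly $\pi^i$ applied to the cell of $x_0$, and the genuine orbit of $x_0$ has the same itinerary. As each cell has diameter $<\epsilon$, the orbit of $x_0$ itself $\epsilon$-shadows $(x_i)$.

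For the converse I would argue the contrapositive: assuming $X$ is not totally disconnected, produce a pseudo-orbit that cannot be shadowed. Fix a nondegenerate connected component $C$ and points $a,b\in C$ with $d(a,b)=3\epsilon_0>0$; suppose toward a contradiction that shadowing holds with constant $\delta$ for $\epsilon_0$, and choose (by rigidity) $N$ with $g:=f^N$ satisfying $\sup_x d(g(x),x)<\delta'$ for a small $\delta'$. A connected component is compact and connected, hence well-chained, so one can move slowly through $C$ in steps below $\delta-\delta'$; concatenating the length-$N$ orbit blocks of such a slowly drifting sequence $(c_m)\subseteq C$ produces a genuine $\delta$-pseudo-orbit for $f$ whose block-initial points sweep across $C$. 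The requirement that a true orbit $\epsilon_0$-shadow this then reduces to asking that the $g$-orbit of the shadowing point track $(c_m)$ within $\epsilon_0$.

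The main obstacle is exactly here. When some power of $f$ is the identity (the periodic case), $g$-orbits are constant and cannot follow a sequence sweeping the nondegenerate set $C$, giving the contradiction at once; this also recovers the familiar failures, such as the identity and the flip $x\mapsto 1-x$ on $[0,1]$. In the genuinely aperiodic case, however, $g$ is only near the identity, so its orbits can themselves drift slowly and may track $(c_m)$, and the naive construction fails (as it must, since one still needs a mechanism ruling out irrational rotations of the circle). To handle this I would invoke the recurrence–minimality dichotomy: uniform rigidity makes every point recurrent, so on a connected piece Corollary~\ref{recurrentminimal} forces minimality, and a minimal system with shadowing is an odometer or a finite orbit, both totally disconnected—contradicting the existence of a nondegenerate component. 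The delicate step I expect to demand the most care is reducing the general aperiodic situation to this minimal case, namely passing from an arbitrary nondegenerate component to an invariant connected subsystem on which the Corollary applies, which I would approach using that $f^{n_k}(C)\to C$ in the Hausdorff metric to control the orbit of the component.
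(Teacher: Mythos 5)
Your ``totally disconnected $\Rightarrow$ shadowing'' direction is correct and takes a genuinely different route from the paper. The paper fixes a clopen cover, chooses $\delta$ so that $f^i$ moves $\delta$-close points by less than the gap for all $i\le N$, and then runs a somewhat delicate induction (comparing $x_L$, $f^N(x_{L-N})$, $x_{L-N}$, $f^{L-N}(x_0)$, $f^L(x_0)$) to show the true orbit of $x_0$ stays in the right cell. You instead use that $f$ is a homeomorphism (a fact worth recording; the paper never isolates it) to upgrade $f^N(P_i)\subseteq P_i$ to $f^N(P_i)=P_i$, pass to the $f$-invariant refinement $\bigvee_{j=0}^{N-1}f^{-j}(\mathcal P)$, and reduce everything to a permutation of clopen cells. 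That is cleaner and arguably more conceptual; it buys a transparent proof of this direction at the small cost of invoking invertibility.

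The other direction, however, has a genuine gap, and it is exactly the one you flag yourself: your direct construction only rules out the periodic case, and your fallback is not carried out. The fallback has real obstructions beyond bookkeeping. Corollary \ref{recurrentminimal} applies to a \emph{connected space with shadowing}; to use it on a nondegenerate component $C$ you would need $C$ to be $f$-invariant (false in general---a uniformly rigid homeomorphism can permute components) and, worse, you would need the restricted system $(C,f|_C)$ to have shadowing, which does not follow from shadowing of $(X,f)$ since the shadowing point of a pseudo-orbit lying in $C$ need not lie in $C$. On top of that, ``a minimal system with shadowing is an odometer or a finite orbit'' is a nontrivial external theorem that the paper neither proves nor cites, so even the connected case of your argument rests on imported machinery.

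The missing idea is a short one, and it is what makes the paper's proof work uniformly for periodic and aperiodic maps alike (including irrational rotations). Build the drifting pseudo-orbit $\seq{c_i}$ exactly as you propose, out of length-$N$ orbit blocks starting at a $\delta/2$-chain $p=a_0,\dots,a_K=q$ in $C$ and then sitting at $q$. A shadowing point $z$ then satisfies $d(z,p)<\epsilon/3$ and $d(f^{kN}(z),q)<\epsilon/3$ for \emph{every} $k\ge K$. Now use uniform rigidity a second time: choose $t\ge K$ with $\rho(f^t,id_X)<\epsilon/(3N)$, so that $\rho(f^{tN},id_X)<\epsilon/3$. Then $f^{tN}(z)$ is simultaneously within $\epsilon/3$ of $z$ (hence within $2\epsilon/3$ of $p$) and within $\epsilon/3$ of $q$, giving $d(p,q)<\epsilon$ for every $\epsilon>0$ and hence $C=\{p\}$. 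The point is that the shadowing orbit is pinned near $q$ along the entire arithmetic progression $\{kN\}_{k\ge K}$, and uniform rigidity guarantees that infinitely many of those times return the orbit near its starting point; no minimality, no invariance of components, and no classification of minimal systems is needed.
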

}

{
	\renewcommand*{\thetheorem}{\ref{HnNondense}}
	\addtocounter{theorem}{-1}
	\begin{theorem} 
		There exist continua with the property that shadowing is not dense in $\cont(X)$. In particular, for each $n\in\N$, the continuum $H_n$ has the property that shadowing is not dense in $\cont(X)$.
	\end{theorem}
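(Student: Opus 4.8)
The plan is to leverage Theorem \ref{rigidsystem} in its contrapositive form. Each $H_n$ is a nondegenerate continuum, hence connected and in particular not totally disconnected, so \emph{no uniformly rigid system on $H_n$ can have shadowing}. Consequently, to show that shadowing is not dense in $\cont(H_n)$ it suffices to produce a single nonempty open set $U \subseteq \cont(H_n)$ every member of which is uniformly rigid: no $f \in U$ then has shadowing, and $U$ witnesses the failure of density. The whole difficulty is thus transferred from shadowing (which is hard to negate directly) to exhibiting an open family of uniformly rigid maps.

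The engine for producing uniformly rigid maps is a general observation that I would isolate as a lemma. If $f : X \to X$ is surjective and the iterate-closure $S = \overline{\{f^n : n \geq 1\}}$ is compact in $(\cont(X), d_{\sup})$, then $(X,f)$ is uniformly rigid. Indeed, compactness of $S$ forces equicontinuity by Arzel\`a--Ascoli, so composition is jointly continuous on $S$ and $S$ is a compact abelian topological semigroup; it therefore contains an idempotent $e = e \circ e$. Since a $d_{\sup}$-limit of surjections of a compact space is again surjective, $e$ is a surjective idempotent, whence $e|_{e(X)} = \mathrm{id}$ and $e(X) = X$ give $e = \mathrm{id}_X$. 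Thus $\mathrm{id}_X \in S$, i.e. some subsequence $f^{n_k} \to \mathrm{id}_X$ uniformly, which is exactly uniform rigidity. In particular, on any space $X$ for which $\cont(X)$ is compact, every surjective self-map is uniformly rigid.

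The construction of $H_n$ must therefore be arranged so that (i) the relevant iterate-closures are compact --- most cleanly, so that $\cont(H_n)$ itself is compact --- and (ii) there is a nonempty open set of \emph{surjective} self-maps. Given (i) and (ii), a nonempty open subset of the surjections consists entirely of uniformly rigid maps and we are done. I would obtain (i) from the explicit (inverse-limit or planar) description of $H_n$, either checking equicontinuity of all self-maps directly or realizing $\cont(H_n)$ as a closed subset of a compact function space. For (ii), I would single out a distinguished homeomorphism $R$ of $H_n$ whose surjectivity is robust under small perturbations --- for instance an essential self-homeomorphism carrying a degree-type invariant that persists on a uniform neighborhood --- so that a $d_{\sup}$-ball about $R$ lies entirely inside the surjections.

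The main obstacle is establishing (ii): that surjectivity of self-maps of $H_n$ is an \emph{open} condition near some map. Compactness (i) is a clean equicontinuity check once $H_n$ is described explicitly, and both the reduction and the rigidity lemma are routine. Openness of the surjective locus, however, is genuinely geometric: I must rule out that arbitrarily small perturbations of $R$ collapse $H_n$ onto a proper subcontinuum. This is precisely where the fine topology of $H_n$ --- and, I expect, the role of the index $n$ --- enters, and where the construction has to be tailored so that no small perturbation destroys surjectivity. Note that the witnessing open set cannot be made of constant maps, since constant maps have shadowing; it must live in the surjective (near-homeomorphism) region of $\cont(H_n)$, which is exactly the region the rigidity lemma controls.
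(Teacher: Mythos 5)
Your reduction---find a nonempty open set of uniformly rigid maps on $H_n$ and invoke Theorem \ref{rigidsystem} in contrapositive---is sound in outline, and your semigroup lemma (compact iterate-closure of a surjection yields uniform rigidity via an idempotent) is essentially the paper's Theorem \ref{rigidcompact}. But the proposal has a genuine gap precisely where you flag it: you never establish your item (ii), the existence of a nonempty open set of surjections in $\cont(H_n)$, and the route you sketch for it---a homeomorphism whose surjectivity is protected by a degree-type invariant on a uniform neighborhood---is not available here and cannot be completed as described. $H_n$ is not a space you get to tailor; it is Cook's continuum, a fixed object whose defining feature (Remark \ref{Cook}) is that $\surj(H_n)$ has exactly $n$ elements, each a homeomorphism, while \emph{every} non-surjective self-map of $H_n$ is constant. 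This is the fact your argument is missing, and it does all the work: the constant maps form a compact (hence closed) subset of $\cont(H_n)$ homeomorphic to $H_n$, so each of the finitely many surjections is an \emph{isolated point} of $\cont(H_n)$. Your item (i) (compactness of the iterate closures) likewise only follows from this same structural fact, so without it neither half of your reduction can be discharged.

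Once Remark \ref{Cook} is in hand, the heavy machinery you propose is unnecessary. The set $\{id_{H_n}\}$ is already open in $\cont(H_n)$, and since $H_n$ is a nondegenerate continuum it is not totally disconnected, so $(H_n,id_{H_n})$ fails shadowing directly by Theorem \ref{identity}. That single isolated point witnesses non-density; no uniform rigidity, no semigroup idempotents, and no perturbation analysis of surjectivity are needed. Your plan would eventually close (each isolated surjection is periodic, hence uniformly rigid, and $\{f\}$ is the required open set), but only after importing the one fact about $H_n$ that your write-up treats as an open construction problem rather than as given.
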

}

{
	\renewcommand*{\thetheorem}{\ref{compactC}}
	\addtocounter{theorem}{-1}
\begin{corollary}
	Let $X$ be a compact, connected metric space with $\cont(X)$ compact. Then $(X,f)$ has shadowing if and only if $\seq{f^n}$ converges to a constant map.
\end{corollary}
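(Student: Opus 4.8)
The plan is to prove the two implications separately, with the forward direction resting on the structure of the enveloping semigroup of $(X,f)$ together with Theorem \ref{rigidsystem}; the reverse direction is a direct shadowing estimate. For the reverse implication, suppose $\seq{f^n}$ converges uniformly to the constant map $c_p$ with value $p$; passing to the limit in $f\circ f^n = f^{n+1}$ shows $p$ is fixed. Given $\eps>0$, fix $N$ with $d(f^n(x),p)<\eps$ for all $x$ and all $n\ge N$. I would then check that a sufficiently fine pseudo-orbit $\seq{x_i}$ is $\eps$-shadowed by the true orbit of its initial point $x_0$: over the first $N$ coordinates the tracing error is controlled by the uniform continuity of $f,\dots,f^N$ and the fineness $\delta$, while for $i\ge N$ both $f^i(x_0)$ and $x_i$ are forced into a small neighbourhood of $p$ (the latter because a fine pseudo-orbit stays close to the contracted image $f^N(x_{i-N})\approx p$). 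This is routine once the bookkeeping of $\delta$ against $\eps$ and the moduli of continuity is arranged.

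The forward implication is the substantive direction. Since $X$ is compact, Arzel\`a--Ascoli shows that compactness of $\cont(X)$ is equivalent to equicontinuity of $\cont(X)$; in particular $\seq{f^n}$ is equicontinuous, composition is jointly continuous on $\cont(X)$, and the closure $E=\overline{\{f^n:n\ge 1\}}$ is a compact abelian topological semigroup under composition. By the Ellis--Namakura theorem $E$ contains an idempotent $e$, which is a retraction of $X$ onto $Y:=e(X)$; as a uniform limit of iterates of $f$, the map $e$ commutes with $f$, so $Y$ is $f$-invariant and $e|_Y=\mathrm{id}_Y$. Writing $e=\lim f^{n_k}$ and restricting to $Y$ gives $\mathrm{id}_Y=\lim (f|_Y)^{n_k}$, so $(Y,f|_Y)$ is uniformly rigid, and $Y$ is connected as a continuous image of $X$.

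The crucial step is transferring shadowing from $(X,f)$ to $(Y,f|_Y)$. Here I would use that $e$ is a (uniformly continuous) retraction commuting with $f$: any pseudo-orbit in $Y$ is a pseudo-orbit in $X$, hence is $\delta$-shadowed by some orbit $\seq{f^n(z)}$; applying $e$ and using $e|_Y=\mathrm{id}_Y$ and $f\circ e=e\circ f$ turns this into the orbit $\seq{f^n(e(z))}$ inside $Y$, which $\eps$-shadows the original provided $\delta$ lies below the modulus of uniform continuity of $e$ for $\eps$. Thus $(Y,f|_Y)$ is uniformly rigid with shadowing, so Theorem \ref{rigidsystem} forces $Y$ to be totally disconnected; being also connected (the degenerate case $|X|=1$ is trivial) it is a single point, whence $e=c_p$ with $p$ a fixed point and $c_p\in E$. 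To upgrade to the full sequence, set $a_n=\sup_{x\in X}d(f^n(x),p)$; since $f^{n+m}=f^{n}\circ f^{m}$ and $f^n(X)\subseteq X$ we get $a_{n+m}\le a_m$, so the tail of $(a_n)$ is dominated by any single value of $a_m$. As $c_p\in E$ yields indices (or a subsequence $m_k\to\infty$) with $a_{m_k}$ arbitrarily small, it follows that $a_n\to 0$, i.e.\ $\seq{f^n}\to c_p$ uniformly.

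I expect the main obstacle to be the semigroup set-up together with the shadowing-transfer lemma: one must verify that equicontinuity genuinely produces a compact semigroup carrying an idempotent, and that the idempotent retraction $e$ is compatible with $f$ so that shadowing descends to the invariant retract $Y=e(X)$. Once this lemma is established, Theorem \ref{rigidsystem} and the elementary monotonicity of $(a_n)$ complete the argument; Corollary \ref{recurrentminimal} may additionally be invoked to rule out recurrence directly, but it is not needed for this streamlined route.
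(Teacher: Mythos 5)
Your proof is correct, but the forward direction takes a genuinely different route from the paper's. The paper first shows $\cont(X)$ compact implies every system is \emph{uniformly retract-rigid} (via a Zorn's-lemma argument on the nested sets $I(h)=\overline{\{h^n\}}$, which plays the role of your Ellis--Namakura idempotent), and then proves a separate technical theorem (Theorem \ref{retractrigid}) asserting that a uniformly retract-rigid system with shadowing has $\bigcap f^n(X)$ totally disconnected; that proof re-runs the pseudo-orbit construction of Theorem \ref{rigidsystem} from scratch, with the extra wrinkle of extending pseudo-orbits backwards into the retract $R$ so that the shadowing point can be forced into $R$. You instead prove a clean \emph{shadowing-transfer lemma}: since the idempotent $e=\lim f^{n_k}$ commutes with $f$ and restricts to the identity on $Y=e(X)$, any pseudo-orbit in $Y$ is shadowed in $X$ by some $z$, and pushing the shadowing orbit forward by $e$ yields a shadowing orbit of $e(z)$ inside $Y$; this lets you quote Theorem \ref{rigidsystem} as a black box on the uniformly rigid system $(Y,f|_Y)$. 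Your route also exploits connectedness earlier ($Y=e(X)$ is automatically connected), so you bypass Lemma \ref{finitelymanycomponents} and Theorem \ref{infiniteequalsnoshadowing} entirely, and your monotonicity argument $a_{m+k}\le a_m$ for $a_n=\sup_x d(f^n(x),p)$ makes explicit the full-sequence convergence that the paper dispatches with a brief appeal to compactness. The trade-off is that the paper's heavier machinery applies to disconnected retract-rigid spaces as well, whereas your streamlined argument is tailored to the connected case stated in the corollary; within that scope, your proof is complete and arguably tidier.
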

}





%

The organization of this paper is as follows. In Section \ref{Prelim}, we define notation and relevant terminology. In Section \ref{Rigid systems}, we demonstrate that, for systems with the shadowing property, recurrence properties have strong implications on the dynamics of the system and the topological structure of the underlying space. In Section \ref{Rigid Spaces}, we use these implications to demonstrate that there is a large class of spaces on which no surjective map has shadowing. Finally, in Section \ref{non-surjective}, we expand on these ideas to demonstrate that there are spaces for which only the constant maps have shadowing.

\section{Preliminaries} \label{Prelim}
In the following material, we will most frequently be indexing sets and sequences with the non-negative integers ($\omega$) or subsets thereof. It will, however, at times be convenient to index with the positive integers ($\N$) instead.

For the purposes of this paper, a \emph{dynamical system} is a pair $(X,f)$ consisting of a compact metric space $(X,d)$ and a continuous function $f:X\to X$.

For a dynamical system $(X,f)$ and a point $x\in X$, the \emph{orbit of $x$ under $f$} (or, the $f$-orbit of $x$) is the sequence $\seq{f^i(x)}_{i\in\omega}$ where $f^0$ denotes the identity. For $\delta>0$, a \emph{$\delta$-pseudo-orbit for $f$} is a sequence $\seq{x_i}_{i\in\omega}$ such that for all $i\in\omega$, $d(f(x_i),x_{i+1})<\delta$. For $\epsilon>0$, we say that a sequence $\seq{x_i}_{i\in\omega}$ in $X$ \emph{$\epsilon$-shadows} a sequence $\seq{y_i}_{i\in\omega}$ in $X$ provided that $d(x_i,y_i)<\epsilon$ for all $i\in\omega$. 

\begin{definition}
A dynamical system $(X,f)$ has \emph{shadowing} (or the \emph{pseudo-orbit tracing property}, sometimes denoted \emph{POTP}) provided that for all $\epsilon>0$, there exists $\delta>0$ such that if $\seq{x_i}$ is a $\delta$-pseudo-orbit for $f$, then there exists $z\in X$ such that for all $i$, $d(x_i,f^i(z))<\epsilon$, i.e. the orbit of $z$ {$\epsilon$-shadows} the pseudo-orbit $\seq{ x_i}$.
\end{definition}


It is often useful to consider finite versions of pseudo-orbits. For a dynamical system $(X,f)$, $\delta>0$, and $a,b\in X$, a \emph{$\delta$-chain from $a$ to $b$} is a finite sequence $\seq{x_i}_{i\leq n}$ with $x_0=a$, $x_n=b$ and such that for all $i<n$, $d(f(x_i),x_{i+1})<\delta$. Note that any truncation of a $\delta$-pseudo-orbit is a $\delta$-chain and that any $\delta$-chain can be extended to a $\delta$-pseudo-orbit by defining $x_{n+j}=f^j(x_n)$ for all $j\in\omega$. For $a\in X$, the \emph{chain accessible set of $a$} is the set $\chainacc{a}$ consisting of all $b\in X$ such that for all $\delta>0$, there exists a $\delta$-chain from $a$ to $b$.

In a dynamical system $(X,f)$ a point $x\in X$ is \emph{recurrent} provided that for all $\epsilon>0$, there exists $n>0$ such that $f^n(x)\in B_\epsilon(x)$. The system $(X,f)$ is a \emph{recurrent system} provided that each $x\in X$ is recurrent. 
 Stronger still, $(X,f)$ is \emph{uniformly rigid} provided that for each $\epsilon>0$, there exists $n>0$ such that for each $x\in X$, $f^n(x)\in B_\epsilon(x)$ (note that this property could sensibly called uniformly recurrent, but there is an extant unrelated pointwise notion of uniform recurrence in the literature). Finally, a system $(X,f)$ is \emph{periodic} if there exists $n>0$ such that $f^n=id_X$.

It is worth noting that the notions of recurrence and uniform rigidity can be equivalently written in terms of convergence in $X$ and the space $\cont(X)$ of continuous self-maps on $X$ with topology generated by the supremum metric
\[\rho(f,g)=\sup_{x\in X}d(f(x),g(x)).\]
It is clear that convergence with respect to $\rho$ is the same as uniform convergence---this topology is sometimes called the topology of uniform convergence.

\begin{remark}
	Let $(X,f)$ be a dynamical system.
	\begin{enumerate}
		\item $x\in X$ is recurrent if and only if there is a subsequence of $\seq{f^i(x)}$ which converges to $x$.
		\item $(X,f)$ is uniformly rigid if and only if there is a subsequence of $\seq{f^i}$ which converges uniformly to $id_X$. 
	\end{enumerate}
\end{remark}

\section{Recurrence and Rigidity in Systems with Shadowing} \label{Rigid systems}

In this section, we explore the connections between, recurrence, rigidity, shadowing and the topology of the space $X$. Much of this is inspired by the following observation about the identity map.

\begin{theorem} \label{identity}
	The system $(X,id_X)$ has shadowing if and only if $X$ is totally disconnected.
\end{theorem}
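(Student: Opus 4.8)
The plan is to translate the shadowing condition for $id_X$ into a purely topological statement about $\delta$-chains and then invoke the classical correspondence between chain-connectedness and connectedness in compact metric spaces. Observe first that for the identity map a $\delta$-pseudo-orbit is simply a sequence $\seq{x_i}$ with $d(x_i,x_{i+1})<\delta$ for all $i$, and the orbit of a point $z$ is the constant sequence $\seq{z}$. Thus $(X,id_X)$ has shadowing precisely when, for every $\epsilon>0$, there is a $\delta>0$ so that any sequence of $\delta$-close consecutive points lies entirely within some $\epsilon$-ball. The key structural fact I would use is that, for the identity, the chain accessible set $\chainacc{a}$ coincides with the connected component of $a$: the set of points reachable from $a$ by a $\delta$-chain is clopen (it is open since a chain can be prolonged through any $\delta$-ball about its endpoint, and its complement is open for the same reason), and intersecting these clopen sets over all $\delta>0$ yields the quasicomponent of $a$, which equals its connected component since $X$ is compact.

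For the direction that total disconnectedness implies shadowing, I would use that a compact, totally disconnected metric space is zero-dimensional. Given $\epsilon>0$, cover $X$ by clopen sets of diameter less than $\epsilon$, extract a finite subcover by compactness, and disjointify it to obtain a finite clopen partition $U_1,\dots,U_k$ with each $\operatorname{diam}(U_j)<\epsilon$. Since the $U_j$ are finitely many pairwise disjoint compact sets, $\delta=\min_{i\neq j}d(U_i,U_j)>0$. Any $\delta$-pseudo-orbit $\seq{x_i}$ then has all consecutive points in the same partition element, hence all $x_i$ lie in a single $U_j$; choosing $z=x_0$, the constant orbit of $z$ then $\epsilon$-shadows $\seq{x_i}$ because $\operatorname{diam}(U_j)<\epsilon$.

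For the converse I would argue the contrapositive. If $X$ is not totally disconnected, it has a connected component $C$ containing two distinct points $x,y$; set $\epsilon=d(x,y)/2$. By the structural fact above, $y\in\chainacc{x}$, so for every $\delta>0$ there is a $\delta$-chain $x=x_0,\dots,x_n=y$. Extending it by the constant tail $x_{n+j}=y$ gives a $\delta$-pseudo-orbit. Were its orbit $\epsilon$-shadowed by some $z$, we would have $d(x,z)<\epsilon$ and $d(y,z)<\epsilon$, whence $d(x,y)<2\epsilon=d(x,y)$, a contradiction. Thus no single $\delta$ works for this $\epsilon$, and shadowing fails.

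The routine parts are the zero-dimensional partition argument and the chain-prolongation bookkeeping. I expect the main obstacle to be the clean verification that $\chainacc{a}$ equals the connected component of $a$ for the identity---in particular that the $\delta$-chain reachable set is clopen and that the intersection over $\delta$ is exactly the quasicomponent, together with the compactness input identifying quasicomponents with components. Once that lemma is in hand, both implications follow quickly.
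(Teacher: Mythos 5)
Your proposal is correct and follows essentially the same route as the paper: the totally-disconnected direction uses the identical finite clopen partition argument, and the converse uses the same chain-from-$a$-to-$b$-with-constant-tail construction, differing only in that you derive the chain-connectedness of components from the quasicomponent/component identification where the paper simply cites the standard well-chainedness fact (Nadler 4.23). No gaps.
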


\begin{proof}
	Suppose that $(X,id_X)$ has shadowing but that $X$ has a non-trivial component $C$. Fix $\epsilon>0$ less than the diameter of $C$ and let $a,b\in C$ with $d(a,b)>\epsilon$. Since the system has shadowing, choose $\delta>0$ such that every $\delta$-pseudo-orbit is $\epsilon/2$-shadowed.
	
	Since $C$ is connected, we can find a sequence $\seq{x_i}_{i\in\omega}$ in $C$ and $n>0$ with $x_0=a$, $x_k=b$ for all $k\geq n$ and $d(x_i,x_{i+1})<\delta$ for all $i\in\omega$ (See 4.23 of \cite{Nadler}). As this is a $\delta$-chain, choose $z\in X$ whose orbit shadows $\seq{x_i}$. But then $d(a,b)\leq d(a,z)+d(z,b)=d(a,z)+d(f^n(z),b)<\epsilon$, a contradiction. 
	
	Conversely, suppose that $X$ is totally disconnected and fix $\epsilon>0$. Since $X$ is compact, metric and totally disconnected, we can find a finite cover $\mathcal U=\{U_j:j\leq n\}$ consisting of pairwise disjoint clopen sets of diameter less than $\epsilon$. Fix $0<\delta<\min\{d(U_j,U_k):j\neq k\}$ and let $\seq{x_i}_{i\in\omega}$ be a $\delta$-pseudo-orbit for $id_X$. By choice of $\delta$, if $x_i\in U_j$, then $d(x_i,x_{i+1})=d(f(x_i),x_{i+1})<\delta$. Thus $d(x_{i+1},U_j)<\delta$, and hence $x_{i+1}\in U_j$ as well. Thus there exists $j\leq n$ with $\{x_i:i\in\omega\}\subseteq U_j$. Fix $z\in U_j$ and observe that for each $i\in\omega$, $d(x_i,z)$ is less than the diameter of $U_j$, and thus $d(f^i(z),x+i)=d(z,x_i)<\epsilon$, i.e. the orbit of $z$ $\epsilon$-shadows $\seq{x_i}$. Thus $(X,id_X)$ has shadowing.
\end{proof}

This result is easily extended to systems which are periodic. First let us observe the following general result.

\begin{lemma} \label{iterates}
	If $(X,f)$ has shadowing, then for each $n>0$, $(X, f^n)$ also has shadowing.
\end{lemma}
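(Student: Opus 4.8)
The plan is to convert pseudo-orbits of $f^n$ into pseudo-orbits of $f$ by splicing in genuine $f$-orbit segments, apply the shadowing of the original system $(X,f)$, and then recover the tracing point by reading off only the indices that are multiples of $n$. The quantifiers should match up with no loss, so I expect the \emph{same} $\delta$ that witnesses shadowing for $f$ to witness shadowing for $f^n$.

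Concretely, I would fix $\epsilon>0$ and use the shadowing of $(X,f)$ to choose $\delta>0$ so that every $\delta$-pseudo-orbit for $f$ is $\epsilon$-shadowed by a genuine $f$-orbit. Now let $\seq{x_i}_{i\in\omega}$ be any $\delta$-pseudo-orbit for $f^n$, so that $d(f^n(x_i),x_{i+1})<\delta$ for all $i$. I would build a candidate $f$-pseudo-orbit $\seq{y_j}_{j\in\omega}$ by interpolating true orbit segments: set $y_{ni+k}=f^k(x_i)$ for each $i\in\omega$ and each $0\leq k<n$. Thus each length-$n$ block is the genuine $f$-orbit of $x_i$, and the following block begins at $x_{i+1}$.

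The verification splits into two cases. Within a single block the consecutive terms agree exactly with an application of $f$, since $f(y_{ni+k})=f^{k+1}(x_i)=y_{ni+k+1}$ for $0\leq k<n-1$, giving distance $0$. At the junction between consecutive blocks we compute $d(f(y_{ni+n-1}),y_{n(i+1)})=d(f^n(x_i),x_{i+1})<\delta$ by the pseudo-orbit hypothesis. Hence every consecutive gap in $\seq{y_j}$ is strictly less than $\delta$, so it is a $\delta$-pseudo-orbit for $f$. Applying the chosen $\delta$ yields $z\in X$ with $d(f^j(z),y_j)<\epsilon$ for all $j$; restricting to $j=ni$ gives $d((f^n)^i(z),x_i)<\epsilon$, so the $f^n$-orbit of $z$ $\epsilon$-shadows $\seq{x_i}$, as required.

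The main (and essentially only) obstacle here is bookkeeping rather than analysis: one must index the interpolation correctly and confirm that the intra-block steps are \emph{exact}, so that inserting them does not force any loosening of $\delta$. Once that is clear, the key observation is simply that shadowing for $f$ along the full index set automatically shadows along the arithmetic subsequence $\{ni:i\in\omega\}$, which is exactly what shadowing for $f^n$ demands.
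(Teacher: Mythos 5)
Your proof is correct and follows essentially the same route as the paper: both interpolate a $\delta$-pseudo-orbit for $f^n$ into a $\delta$-pseudo-orbit for $f$ by inserting the exact orbit segments $f^k(x_i)$ for $0\leq k<n$, apply shadowing for $f$ with the same $\delta$, and read off the tracing at indices divisible by $n$. The case analysis (exact intra-block steps versus the junction step bounded by the pseudo-orbit hypothesis) matches the paper's verification precisely.
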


\begin{proof}
	Suppose that $(X,f)$ has shadowing and fix $\epsilon>0$. Choose $\delta>0$ such that every $\delta$-pseudo-orbit for $f$ is $\epsilon$-shadowed by an $f$-orbit. 
	
	Now, let $\seq{x_i}_{i\in\omega}$ be a $\delta$-pseudo-orbit for $f^n$. Define a sequence $\seq{y_j}_{j\in\omega}$ as follows. For $j\in\omega$, let $q_j, r_j\in\omega$ be the unique choices with $r_j<n$ and $j=q_jn+r_j$, and define $y_j=f^{r_j}(x_{q_j})$. Observe that for $j\in\omega$ such that $r_j\neq n-1$, we have $q_{j+1}=q_j$ and $y_{j+1}=f^{r_{j+1}}(x_{q_j})=f^{r_{j}+1}(x_{q_j})=f(y_j)$ and hence $d(y_{j+1},f(y_j))=0<\delta$. For $j$ such that $r_j=n-1$, we have $q_{j+1}=1+q_j$ and $r_{j+1}=0$, and hence $d(y_{j+1},f(y_{j})=d(x_{1+q_j},f(f^{n-1}(x_{q_j}))=d(x_{1+q_j},f^{n}(x_{q_j})<\delta$ since $\seq{x_i}$ is a $\delta$-pseudo-orbit for $f^n$.
	
	In either case, we see that $d(y_{j+1},f(y_j))<\delta$, so we can choose $z\in X$ such that $d(f^j(z),y_j)<\epsilon$ for all $j\in\omega$. But then $d(f^{in}(z),y_{in})<\epsilon$ for all $i$ and, since $y_{in}=x_{q_{in}}=x_i$, we have $d((f^n)^i(z),x_i)<\epsilon$, i.e. the $f^n$-orbit of $z$ $\epsilon$-shadows $\seq{x_i}$.
\end{proof}

\begin{theorem} \label{periodic}
	A periodic system $(X,f)$ has shadowing if and only if $X$ is totally disconnected.
\end{theorem}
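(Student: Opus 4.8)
The plan is to deduce both directions from the two results just proved, so that almost all the work is reduced to an application of Theorem \ref{identity}. For the forward implication there is essentially nothing to do: if the periodic system $(X,f)$ has shadowing, fix $n>0$ with $f^n=id_X$. By Lemma \ref{iterates} the system $(X,f^n)$ has shadowing, but $(X,f^n)=(X,id_X)$, so Theorem \ref{identity} immediately yields that $X$ is totally disconnected.

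The substance of the argument is the converse. Suppose $X$ is totally disconnected and $f^n=id_X$ (note $f$ is then a homeomorphism, with $f^{-1}=f^{n-1}$). Fix $\epsilon>0$. As in the proof of Theorem \ref{identity}, I would first choose a finite partition $\mathcal P$ of $X$ into pairwise disjoint clopen sets of diameter less than $\epsilon$. The idea is that such a partition need not interact well with $f$, so I would refine it to an $f$-compatible partition $\mathcal P'$ whose cells are the nonempty sets of the form $\bigcap_{i=0}^{n-1} f^{-i}(P_{j_i})$ with each $P_{j_i}\in\mathcal P$. Each cell of $\mathcal P'$ is clopen (a finite intersection of clopen preimages, using continuity of $f$), there are only finitely many of them, and each is contained in its $i=0$ factor and so has diameter less than $\epsilon$.

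The step I expect to be the main obstacle is verifying that $f$ respects this refined partition. If $x$ and $y$ lie in a common cell $V$ of $\mathcal P'$, then $f^i(x)$ and $f^i(y)$ lie in a common cell of $\mathcal P$ for every $i<n$; applying this to $f(x)$ and $f(y)$ and using $f^n=id_X$ to wrap the index from $n-1$ back to $0$ shows that $f(x)$ and $f(y)$ again lie in a common cell of $\mathcal P'$. Hence $f$ carries each cell into a single cell, inducing a permutation $\pi$ of the cells with $f(V_k)\subseteq V_{\pi(k)}$ for each $k$. This is exactly the property that can fail for a general homeomorphism, and it is the point where periodicity is essential.

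With this in hand the shadowing estimate follows the pattern of Theorem \ref{identity}. I would set $\delta=\min\{d(V_k,V_l):k\ne l\}>0$ and let $\seq{x_i}$ be a $\delta$-pseudo-orbit. If $x_i\in V_k$ then $f(x_i)\in V_{\pi(k)}$, and since $d(f(x_i),x_{i+1})<\delta$ is strictly smaller than the distance from $V_{\pi(k)}$ to any other cell, we are forced to have $x_{i+1}\in V_{\pi(k)}$. Inductively, if $x_0\in V_{k_0}$ then $x_i\in V_{\pi^i(k_0)}$ for all $i$. Choosing any $z\in V_{k_0}$ gives $f^i(z)\in V_{\pi^i(k_0)}$, so $f^i(z)$ and $x_i$ lie in a common cell of diameter less than $\epsilon$, whence $d(f^i(z),x_i)<\epsilon$ for all $i$. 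Thus the $f$-orbit of $z$ $\epsilon$-shadows $\seq{x_i}$, and $(X,f)$ has shadowing.
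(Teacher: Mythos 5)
Your proof is correct, and while the forward direction is identical to the paper's (Lemma \ref{iterates} followed by Theorem \ref{identity}), your converse takes a genuinely different route. The paper works metrically: it uses uniform continuity of $f,f^2,\dots,f^{n-1}$ to pick $\delta$ so that a single pseudo-orbit error stays within $\eta$ under every iterate (periodicity is what upgrades the finite estimate ``for $i\leq n$'' to ``for all $i\in\omega$''), and then shows that the staircase sequence $f^i(x_0),f^{i-1}(x_1),\dots,x_i$ has consecutive terms within $\eta$, forcing all of them into one clopen set of the cover. You instead make the \emph{cover} compatible with $f$ by passing to the common refinement of $\mathcal P, f^{-1}\mathcal P,\dots,f^{-(n-1)}\mathcal P$; here periodicity is what closes the itinerary up cyclically, so that $f$ permutes the cells, and the shadowing estimate then becomes a verbatim transcription of the identity-map argument applied to the induced finite permutation. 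Your version is more structural and dispenses with all metric estimates except the separation constant of the refined partition (and it makes transparent that one may take $z=x_0$); the paper's $\eta$-propagation technique is less tidy here but is the one that survives into Theorem \ref{rigidsystem}, where the return time $N$ depends on $\epsilon$ and no finite $f$-invariant refinement exists. One cosmetic remark: you do not actually need $\pi$ to be a permutation (though it is, since the itinerary shift is a cyclic bijection on nonempty cells); the inclusion $f(V_k)\subseteq V_{\pi(k)}$ is all the argument uses.
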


\begin{proof}
	Let $(X,f)$ be a periodic dynamical systems and let $n>0$ with $f^n=id_X$.
	
	By the above lemma, if $(X,f)$ has shadowing, then so does $(X,f^n)$, i.e. $(X, id_X)$ has shadowing, and hence $X$ is totally disconnected by Theorem \ref{identity}.
	
	Conversely, suppose that $X$ is totally disconnected and fix $\epsilon>0$. Since $X$ is compact, metric and totally disconnected, we can find a finite cover $\mathcal U=\{U_j:j\leq N\}$ consisting of pairwise disjoint clopen sets of diameter less than $\epsilon$.

	Fix $0<\eta<\min\{d(U_j,U_k):j\neq k\}$ and by uniform continuity of $f$ and its iterates, choose $0<\delta<\eta$ such that if $d(a,b)<\delta$, then $d(f^i(a),f^i(b))<\eta$ for $i\leq n$. However, since $f^n=id_X$, it follows that $d(f^i(a),f^i(b))<\eta$ for all $i\in\omega$.
	
	
	Now, let $\seq{x_i}_{i\in\omega}$ be a $\delta$-pseudo-orbit for $f$. We claim that the point $x_0$ $\epsilon$-shadows this pseudo-orbit. To observe this, notice that for each $j,k\in\omega$, we have $d(f^k(f(x_j)),f^k(x_{j+1}))<\eta$ since $d(f(x_i),x_{i+1})<\delta$. In particular, for each $i$, the sequence $f^i(x_0),f^{i-1}(x_1),\ldots f(x_{i-1}),x_i$ has the property that adjacent terms are within $\eta$ of one another, and by choice of $\eta$, there exists an index $j(i)$ such that each term belongs to $U_{j(i)}$, and hence $d(f^i(x_0),x_i)<\epsilon$. This establishes that $x_0$ $\epsilon$-shadows the pseudo-orbit as claimed and therefore $(X,f)$ has shadowing.
\end{proof}

Of course, periodicity in the sense that $f^n=id_X$ is a very strong form of recurrence. It is then natural to consider whether weaker forms of recurrence have similar implications. We begin by considering recurrent systems, in which each point in the system is recurrent, but there is no guaranteed synchronization of return times. 

\begin{lemma} \label{reversingchains}
	Let $(X,f)$ be a recurrent system. Then for all $\delta>0$, there exists $\eta>0$ such that if $x,y\in X$ with $d(f(x),y)<\eta$, then there exists a $\delta$-chain from $y$ to $x$.
\end{lemma}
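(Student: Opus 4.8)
The plan is to exploit the recurrence of the single point $x$ to produce a near-return of its orbit back to $x$, and then to splice $y$ onto that orbit using the uniform continuity of $f$. The decisive idea is that there are exactly two sources of error here---one from replacing $y$ by $f(x)$, and one from the approximate return of the orbit of $x$---and that they can be kept at \emph{distinct} links of the chain, so that no single transition is ever forced to absorb both.

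First I would fix $\delta>0$ and invoke the uniform continuity of $f$ on the compact space $X$ to choose $\eta>0$, depending only on $\delta$, such that $d(a,b)<\eta$ implies $d(f(a),f(b))<\delta$. This single $\eta$ works for all points, which is exactly the uniformity the statement demands; by contrast, the chain itself is allowed to depend on the particular $x$ and $y$. Next, given $x$ (which is recurrent by hypothesis), I would use the subsequence characterization of recurrence recorded above to select an integer $N\geq 3$ with $d(f^N(x),x)<\delta$. With such an $N$ in hand I would define the chain $z_0=y$, then $z_k=f^{k+1}(x)$ for $1\leq k\leq N-2$, and finally $z_{N-1}=x$, so that it runs from $y$ to $x$.

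The verification that this is a $\delta$-chain then splits cleanly into three kinds of transitions: the initial one, $d(f(z_0),z_1)=d(f(y),f^2(x))<\delta$, which holds by the choice of $\eta$ together with the hypothesis $d(f(x),y)<\eta$; the interior ones (present when $N\geq 4$), $d(f(z_k),z_{k+1})=d(f^{k+2}(x),f^{k+2}(x))=0$, which are exact because the chain simply follows the orbit of $x$; and the terminal one, $d(f(z_{N-2}),z_{N-1})=d(f^N(x),x)<\delta$, which holds by the choice of $N$. I do not expect any genuine obstacle beyond bookkeeping, but the one point requiring care---and the sole reason for insisting $N\geq 3$---is to guarantee that the uniform-continuity error at the first link and the recurrence error at the last link fall on different transitions; were $N=2$ allowed, these would collapse onto a single step and could only be bounded by $2\delta$, whereas for $N\geq 3$ they remain separated (and any links between them are traced exactly).
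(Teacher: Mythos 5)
Your proof is correct and follows essentially the same route as the paper's: choose $\eta$ by uniform continuity so that $d(f(x),y)<\eta$ forces $d(f(y),f^2(x))<\delta$, use recurrence of $x$ to find a near-return $d(f^N(x),x)<\delta$ with $N\geq 3$, and splice $y$ onto the orbit of $x$ so the two errors land on distinct links. The only differences are cosmetic index conventions.
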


\begin{proof}
	Suppose that $(X,f)$ is recurrent and let $\delta>0$. By uniform continuity of $f$, fix $\eta>0$ such that if $d(a,b)<\eta$, then $d(f(a),f(b))<\delta$.
	
	Now, let $x,y\in X$ such that $d(f(x),y)<\eta$. Since $(X,f)$ is recurrent, we can choose $M>2$ such that $d(f^M(x),x)<\delta$. Now, observe that $d(f(y),f^2(x))<\delta$ by choice of $\eta$, and thus, if we define $a_i$ for $i\leq M-1$ as follows:
	\[a_i=\begin{cases} 
		y & i = 0 \\
		f^{i+1}(x) & 0<i<M-1 \\
		x & i=M-1
	\end{cases},\]
	we see that $\seq{a_i}_{i\leq M}$ is a $\delta$-chain from $y$ to $x$.
\end{proof}

\begin{lemma} \label{equiv}
	Let $(X,f)$ be a recurrent system. Then chain accessibility is an equivalence relation on $X$. Furthermore, for all $p\in X$, the  component of $p$ in $X$ is contained in  $\chainacc{p}$.
\end{lemma}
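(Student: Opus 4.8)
The plan is to prove the two assertions separately. The equivalence-relation claim is largely formal, so I would dispatch reflexivity and transitivity first and spend the effort on symmetry. Reflexivity is immediate, since the single-term sequence $\seq{a}$ is a $\delta$-chain from $a$ to $a$ for every $\delta>0$. Transitivity rests on the remark that a $\delta$-chain from $a$ to $b$ and a $\delta$-chain from $b$ to $c$ glue at their shared endpoint $b$ into one $\delta$-chain from $a$ to $c$; since $b\in\chainacc{a}$ and $c\in\chainacc{b}$ supply such chains for each $\delta$, we obtain $c\in\chainacc{a}$. Symmetry is where I would invoke Lemma \ref{reversingchains}: given $b\in\chainacc{a}$ and a target $\delta>0$, first extract the corresponding $\eta>0$ from Lemma \ref{reversingchains}, then take an $\eta$-chain $a=x_0,\dots,x_n=b$. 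Each link satisfies $d(f(x_i),x_{i+1})<\eta$, so the lemma returns a $\delta$-chain from $x_{i+1}$ back to $x_i$; concatenating these in reverse order yields a $\delta$-chain from $b$ to $a$, and since $\delta$ was arbitrary, $a\in\chainacc{b}$.

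For the containment of the component, the crux is a conversion step that turns metric proximity into a dynamical chain: for every $\delta>0$, any $u,v$ with $d(u,v)<\delta/2$ admit a $\delta$-chain from $u$ to $v$. This is exactly the place recurrence enters. Using recurrence of $u$, choose $M>0$ with $d(f^M(u),u)<\delta/2$ and take the chain $u,f(u),\dots,f^{M-1}(u),v$; every interior link has length $0$, and the final link has length $d(f^M(u),v)\le d(f^M(u),u)+d(u,v)<\delta$. With this conversion in hand, I would fix $q$ in the component $C$ of $p$ and $\delta>0$, use the fact that the continuum $C$ is well-chained (4.23 of \cite{Nadler}) to produce a sequence $p=z_0,\dots,z_m=q$ in $C$ with $d(z_j,z_{j+1})<\delta/2$, apply the conversion step to each consecutive pair, and concatenate the resulting $\delta$-chains into one $\delta$-chain from $p$ to $q$. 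As $\delta$ was arbitrary, $q\in\chainacc{p}$, giving $C\subseteq\chainacc{p}$.

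I expect the main obstacle to be conceptual rather than computational: recognizing why recurrence is indispensable in the second part. Metric closeness of $u$ and $v$ says nothing a priori about the existence of a dynamical chain, which is constrained to approximately follow $f$; recurrence is precisely the ingredient that lets the orbit of $u$ return near $u$, so that a single small jump then reaches $v$. A secondary point worth keeping straight is that the $\delta$-chains witnessing $C\subseteq\chainacc{p}$ are allowed to leave $C$—they run along orbits in all of $X$—so no difficulty about remaining inside $C$ ever arises, and the well-chainedness of $C$ is needed only to place the intermediate \emph{endpoints} $z_j$ within $C$.
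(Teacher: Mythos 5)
Your proof is correct and follows essentially the same route as the paper's: symmetry via Lemma \ref{reversingchains} applied link-by-link to an $\eta$-chain and then reversed and concatenated, and the component containment via well-chainedness of $C$ together with the recurrence-based conversion of a small metric jump $d(u,v)<\delta/2$ into the $\delta$-chain $u,f(u),\dots,f^{M-1}(u),v$. The one cosmetic difference is reflexivity: you use the length-zero chain $\seq{a}$, which is legitimate only if degenerate chains are admitted under the definition, whereas the paper produces the nontrivial loop $p,f(p),\dots,f^{M-1}(p),p$ from recurrence --- but your conversion step applied with $u=v=p$ yields exactly that chain, so nothing is lost either way.
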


\begin{proof}
	First, note that chain accessibility is transitive in all dynamical systems. Indeed, if $r\in\chainacc{q}$ and $q\in\chainacc{p}$, then for $\delta>0$ there exist $\delta$-chains $\seq{a_i}_{i\leq n}$ from $p$ to $q$ and $\seq{b_i}_{i\leq m}$ from $q$ to $r$. By concatenating these chains, i.e. defining a chain $\seq{c_i}_{i\leq n+m}$ by $c_i=a_i$ for $i\leq m$ and $c_i=b_{i-n}$ for $i\geq n$, we see that there is a $\delta$-chain from $p$ to $r$.
	
	Now, suppose that $(X,f)$ is a recurrent system and let $p\in X$. It is straightforward to see that $p\in\chainacc{p}$: fix $\delta>0$ and, by recurrence, find $M>0$ such that $d(f^M(p),p)<\delta$. It is easy to see that the sequence $p,f(p),\ldots f^{M-1}(p),p$ is the desired $\delta$-chain.
	
	Now, suppose that $q\in\chainacc{p}$. Fix $\delta>0$ and choose $\eta>0$ as in Lemma \ref{reversingchains}. Since $q\in\chainacc{p}$, let $\seq{a_i}_{i\leq n}$ be a $\eta$-chain with $a_0=p$ and $a_n=q$. We wish to demonstrate that there is a $\delta$-chain from $q$ to $p$. By our choice of $\eta$, there exists, for each $0<i\leq n$, a $\delta$-chain from $a_i$ to $a_{i-1}$. By concatenating these chains, we have a $\delta$-chain from $a_n$ to $a_0$, i.e. from $q$ to $p$. Since we can do this for all $\delta>0$, we see that $p\in\chainacc{q}$.
	
	Thus, we have established that chain equivalence is an equivalence relation on $X$. Now, fix $p\in X$ and suppose that $q$ is in the component of $p$. Let $\delta>0$. As in the proof of Theorem \ref{identity}, we can find find a sequence $\seq{x_j}_{j\leq n}$ with $x_0=p$, $x_n=q$ and $d(x_j,x_{j+1})<\delta/2$ for all $i<n$. Additionally, since $(X,f)$ is recurrent, for each $i<n$, we can find $M_j>0$ with $d(f^{M_j}(x_j),x_j)<\delta>2$.
	
	Now, let $M=\sum_{j<n}M_j$ and define the sequence $\seq{a_i}_{i\leq M}$ as follows. For each $i\leq M$, choose $k\leq n$ such that $\sum_{j< k}M_j\leq i<\sum_{j\leq k}M_j$ and define $a_i=f^{i-\sum_{j< k}M_j}(x_k)$. It is straightforward to verify that $\seq{a_i}_{i\leq M}$ is a $\delta$-chain from $p$ to $q$. Indeed, for each $i<M$, there exists $j$ such that either	$a_i=f^l(x_j)$ for some $l<M_j-1$ or $a_i=f^{M_j-1}(x_j)$. In the former case, $a_{i+1}=f^{l+1}(x_j)$ and hence $d(f(a_i),a_{i+1})=0<\delta$ and in the latter case $a_{i+1}=x_{j+1}$ and $d(f(a_i),a_{i+1})=d(f(f^{M_j-1}(x_j)),x_{j+1})\leq d(f^{M_j}(x_j),x_{j})+d(x_j,x_{j+1})<\delta$, thus verifying the claim that $\seq{a_i}_{i\leq M}$ is a $\delta$-chain.	
\end{proof}

Interestingly, this establishes that for each $p\in X$, the set $\chainacc{p}$ is \emph{chain transitive} in the sense that for any pair $x,y\in\chainacc{p}$, and any $\delta>0$ there is a $\delta$-chain from $x$ to $y$. In particular, this means that $\chainacc{p}$ is closed and completely invariant \cite{deVries}. In addition, this also means that every point belongs to a \emph{basic set} of $(X,f)$, i.e. a maximal chain transitive subset of $X$. As we shall see in the following theorem, if $(X,f)$ also has shadowing, then $\chainacc{p}$ is also \emph{minimal} in the sense that if $A\subseteq\chainacc{p}$ is closed and invariant under $f$ (i.e. $f(A)\subseteq A$), then $A=\chainacc{p}$.

\begin{theorem} \label{chainacc}
	Let $(X,f)$ be a recurrent system with shadowing. Then, for all $p\in X$, $(\chainacc{p},f|_{\chainacc{p}})$ is minimal.
\end{theorem}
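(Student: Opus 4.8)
The plan is to argue by contradiction, exploiting that $\chainacc{p}$ is chain transitive (as observed just after Lemma \ref{equiv}) together with the shadowing property and the recurrence of \emph{every} point. Recall that minimality of $(\chainacc{p}, f|_{\chainacc{p}})$ means it has no proper, nonempty, closed, $f$-invariant subset. So I would suppose $A\subsetneq\chainacc{p}$ is nonempty, closed, and satisfies $f(A)\subseteq A$, fix $a\in A$, and choose $q\in\chainacc{p}\setminus A$. Since $A$ is closed, $r:=d(q,A)>0$; I set $\epsilon=r/4$ and let $\delta>0$ be the shadowing constant for this $\epsilon$.

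The idea is to manufacture a pseudo-orbit that begins at $q$ but is eventually trapped near $A$, and then use recurrence to pull $q$ itself back toward $A$. Concretely, since $q,a\in\chainacc{p}$ and $\chainacc{p}$ is chain transitive, there is a $\delta$-chain $q=x_0,x_1,\ldots,x_m=a$ from $q$ to $a$. I extend it to a genuine $\delta$-pseudo-orbit by appending the true orbit of $a$, i.e. $x_{m+j}=f^j(a)$ for $j\geq 0$; the transition is exact since $f(x_m)=f(a)=x_{m+1}$. Applying shadowing yields a point $z$ with $d(f^i(z),x_i)<\epsilon$ for all $i$. In particular $d(z,q)<\epsilon$, and for every $j\geq 0$, $d(f^{m+j}(z),f^j(a))<\epsilon$; since $f^j(a)\in A$ by invariance of $A$, the entire forward tail $\{f^i(z):i\geq m\}$ lies within $\epsilon$ of $A$.

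To close the argument I would use that $z$ is recurrent. By Remark~(1) some subsequence of $\seq{f^i(z)}$ converges to $z$, so there is a return time $k\geq m$ with $d(f^k(z),z)<\epsilon$. Then $d(z,A)\leq d(z,f^k(z))+d(f^k(z),A)<2\epsilon=r/2$. On the other hand $d(z,A)\geq d(q,A)-d(z,q)>r-\epsilon=3r/4$, which is impossible since $r/2<3r/4$. This contradiction shows that no such $q$ exists, so $A=\chainacc{p}$ and the restricted system is minimal.

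The step I expect to be the crux is bridging the gap between the shadowing point and the given points: shadowing only ever produces a point $z$ \emph{near} $q$ rather than $q$ itself, so one cannot directly conclude anything about the orbit of $q$. The resolution---and the reason recurrence is indispensable---is that the recurrence of $z$ converts the long-term proximity of its orbit to $A$ into proximity of $z$ (hence of $q$) to $A$, contradicting the choice of $q$ outside the closed set $A$. It will be important that $A$ is \emph{invariant} (so the appended true orbit of $a$ remains in $A$) and that the constants are chosen so that $2\epsilon<r$; the orientation of the chain, from $q$ into $A$ rather than from $A$ out to $q$, is also essential, since the reverse construction would place $z$ near $a$ and yield no contradiction.
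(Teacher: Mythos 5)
Your proposal is correct and follows essentially the same route as the paper's proof: a $\delta$-chain from the outside point into the invariant set, extended by a true orbit, shadowed by some $z$, with recurrence of $z$ used to pull the outside point back toward the invariant set. The only difference is cosmetic---the paper argues directly that $d(p,C)<\epsilon$ for every $\epsilon>0$ and then invokes Lemma \ref{equiv} to upgrade $p\in C$ to $C=\chainacc{p}$, whereas you run the same construction as a contradiction with a fixed $\epsilon=d(q,A)/4$.
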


\begin{proof}
	Suppose that $(X,f)$ is recurrent and has shadowing. Fix $p\in X$ and let $C\subseteq \chainacc{p}$ be closed and invariant. We will show that $p\in C$. Once we have done so, since Lemma \ref{equiv} establishes that for all $q\in\chainacc{p}$, $\chainacc{q}=\chainacc{p}$, we see that this holds for all $q\in\chainacc{p}$, establishing that $C=\chainacc{p}$.
	
	Now, let $c\in C$ and $\epsilon>0$. Choose $\delta>0$ such that any $\delta$-pseudo-orbit for $f$ is $\epsilon/3$-shadowed. Since $c\in\chainacc{p}$, we can fix a $\delta$-chain $\seq{a_i}_{i\leq n}$ with $a_0=p$ and $a_n=c$. We can extend this $\delta$-chain to a $\delta$-pseudo-orbit for $f$ by defining, for $i>n$, $a_i=f^{i-n}(c)$.
	
	By choice of $\delta$, fix $z\in X$ such that the orbit of $z$ $\epsilon/3$-shadows $\seq{a_i}_{i\in\omega}$. In particular,  $d(z,p)<\epsilon/3$ and for all $i\geq n$, $d(f^i(z),f^{i-n}(c))<\epsilon/3$. Since $(X,f)$ is recurrent, there exists $M\geq n$ such that $d(f^M(z),z)<\epsilon/3$. Then $d(f^{M-n}(c),p)\leq d(f^{M-n}(c),f^M(z))+d(f^{M}(z),z)+d(z,p)<\epsilon$. Since $C$ is invariant by hypothesis, $f^{M-n}(c)\in C$ and thus we see that $d(p,C)<\epsilon$. Bu this holds for all $\epsilon>0$, and $C$ is closed, so $p\in C$ as claimed.
\end{proof}

Interestingly, by applying a theorem of Birkhoff (Theorem 4.2.2 in \cite{deVries}), we can establish that recurrent systems with shadowing actually exhibit a much stronger form of recurrence. In particular, by this theorem, since every point of $X$ belongs to a minimal set, they are also \emph{almost periodic} (a point $x$ is almost periodic provided that for all $\epsilon>0$ there exists $l>0$ such that for any $n\in \omega$, the intersection $\{f^i(x):n\leq i<n+l\}\cap B_\epsilon(x)$ is nonempty).

In Theorems \ref{identity} and \ref{periodic}, we established that shadowing implies total disconnectedness in periodic systems. As it happens, since periodic orbits are minimal, these results are also consequences of the following more general result, which is a direct application of Theorem \ref{chainacc} and Lemma \ref{equiv}. 


\begin{corollary} \label{recurrentanalogue}
	Let $(X,f)$ be a recurrent system with shadowing. Then $X$ is the disjoint union of its minimal subsets and no component of $X$ meets more than one such subset.
\end{corollary}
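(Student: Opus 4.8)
The plan is to read the statement directly off the two preceding results, Lemma \ref{equiv} and Theorem \ref{chainacc}, with one small gluing argument in between. First I would observe that, by Lemma \ref{equiv}, chain accessibility is an equivalence relation on $X$, so the collection $\{\chainacc{p}:p\in X\}$ is a partition of $X$ into pairwise disjoint blocks, each of which is closed and (completely) invariant. By Theorem \ref{chainacc}, each block $\chainacc{p}$ is a minimal subsystem of $(X,f)$. This already displays $X$ as a disjoint union of minimal sets; to get the first assertion exactly as stated, all that remains is to check that these blocks are \emph{all} of the minimal subsets of $X$.

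To see this, let $M\subseteq X$ be any minimal subset and fix $p\in M$. Since $M$ is closed and forward invariant and contains $p$, the forward orbit closure $\overline{\{f^i(p):i\geq 0\}}$ is a nonempty closed invariant subset of $M$; it is likewise a nonempty closed invariant subset of $\chainacc{p}$, since $\chainacc{p}$ is closed and forward invariant and contains $p$. Minimality of $\chainacc{p}$ (Theorem \ref{chainacc}) forces $\overline{\{f^i(p):i\geq 0\}}=\chainacc{p}$, so $\chainacc{p}\subseteq M$; minimality of $M$ then forces $M=\chainacc{p}$. Hence the minimal subsets of $X$ are precisely the blocks $\chainacc{p}$, and since these partition $X$, the space is the disjoint union of its minimal subsets.

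For the second assertion I would invoke the final clause of Lemma \ref{equiv}, namely that for every $p\in X$ the component of $p$ in $X$ is contained in $\chainacc{p}$. Given a component $C$ of $X$, pick any $p\in C$; then $C\subseteq\chainacc{p}$, and because the minimal subsets are pairwise disjoint, $C$ cannot meet any minimal subset other than $\chainacc{p}$. Thus no component of $X$ meets more than one minimal subset, which completes the proof.

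The only step demanding genuine care, rather than bookkeeping, is the identification of the minimal subsets with the chain-accessibility classes: the corollary is phrased in terms of \emph{minimal subsets}, whereas the available machinery produces the sets $\chainacc{p}$, and the orbit-closure sandwich above is precisely what reconciles the two notions. Everything else is a direct transcription of Lemma \ref{equiv} (disjointness and the component containment) and Theorem \ref{chainacc} (minimality of each block).
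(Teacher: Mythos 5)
Your proof is correct and follows exactly the route the paper intends: the corollary is stated there as a direct application of Lemma \ref{equiv} and Theorem \ref{chainacc}, and your argument simply fleshes out that application, including the worthwhile check that the chain-accessibility classes are precisely the minimal subsets. Nothing to add.
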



This has particularly interesting implications when $X$ is connected. 

\begin{corollary} \label{recurrentminimal}
	Let $X$ be connected and let $(X,f)$ be a system with shadowing. Then $(X,f)$ is minimal if and only if it is recurrent.
\end{corollary}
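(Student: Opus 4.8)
The plan is to derive this corollary directly from Corollary~\ref{recurrentanalogue}, using connectedness of $X$ to collapse the decomposition into a single minimal set. The equivalence is stated as a biconditional, so I would handle the two directions separately, observing that one direction requires essentially no work.

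\begin{proof}[Proof proposal]
First I would dispatch the easy direction: if $(X,f)$ is minimal, then every orbit is dense, and in particular each point $x$ returns arbitrarily close to itself (the orbit closure of $x$ is all of $X\ni x$), so each point is recurrent and the system is recurrent. This direction uses only the definition of minimality and makes no use of shadowing or connectedness.

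\textbf{For the substantive direction}, I would assume $(X,f)$ is recurrent (and has shadowing) and invoke Corollary~\ref{recurrentanalogue}, which tells us that $X$ is the disjoint union of its minimal subsets and that no component of $X$ meets more than one such subset. Now apply the hypothesis that $X$ is connected: $X$ is its own unique component, so by the ``no component meets more than one minimal subset'' clause, $X$ can meet only one minimal subset. Since the minimal subsets cover $X$, that single minimal subset must equal $X$ itself. Hence $X$ is minimal, which is precisely the claim.

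\textbf{The main thing to be careful about} is ensuring that the minimal subsets in Corollary~\ref{recurrentanalogue} are genuinely nonempty and cover $X$ (so that ``only one is met'' forces it to be everything); this follows from the ``disjoint union'' phrasing, since every point of $X$ lies in some minimal subset by the chain-accessibility analysis underlying Theorem~\ref{chainacc}. There is no real obstacle here: the corollary is an almost immediate specialization of the preceding structural result once connectedness is imposed, with the only genuine content having already been extracted in Theorem~\ref{chainacc} and Corollary~\ref{recurrentanalogue}.
\end{proof}
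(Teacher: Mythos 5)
Your proposal is correct and follows the paper's own argument essentially verbatim: the recurrent-implies-minimal direction is exactly the specialization of Corollary~\ref{recurrentanalogue} to a connected space, and the converse is the standard fact that minimal systems are recurrent. No issues.
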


\begin{proof}
	Let $(X,f)$ be a system with shadowing with $X$ connected. 
	
	Suppose that $(X,f)$ is recurrent. Then, since $X$ has only one component, Corollary \ref{recurrentanalogue} immediately implies that $(X,f)$ is minimal.
	
	Conversely, it is well known that a minimal system is recurrent \cite{deVries}, and thus if $(X,f)$ is minimal, then it is recurrent.
\end{proof}


We now turn out attention to the stronger recurrence-type notion of uniform rigidity. In uniformly rigid systems, the `return times' of each point coincide, allowing for much stronger results which paralleling those for periodic systems.


\begin{theorem} \label{rigidsystem}
	A uniformly rigid system $(X,f)$ has shadowing if and only if $X$ is totally disconnected.
\end{theorem}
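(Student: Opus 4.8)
The plan is to prove both implications directly. The forward implication (shadowing $\Rightarrow$ totally disconnected) will refine the contradiction argument of Theorem \ref{identity}, now fuelled by the synchronized return times that uniform rigidity supplies; the reverse implication will exploit total disconnectedness to manufacture a finite clopen partition on which $f$ behaves like a permutation of cells.

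For the forward direction I would argue by contradiction: assume $(X,f)$ is uniformly rigid with shadowing but that $X$ has a nontrivial component $C$. Fix $\epsilon>0$ less than the diameter of $C$ and $a,b\in C$ with $d(a,b)>\epsilon$, and use shadowing to choose $\delta>0$ so that every $\delta$-pseudo-orbit is $\epsilon/3$-shadowed. As in Theorem \ref{identity}, connectedness of $C$ yields a metric chain $a=x_0,\dots,x_m=b$ in $C$ with $d(x_j,x_{j+1})<\delta/2$; crucially, this fixes the finite number $m$ of links. \emph{Only now} would I invoke uniform rigidity to select $n>0$ with $\gamma:=\rho(f^n,id_X)<\min\{\delta/2,\ \epsilon/(3m)\}$. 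Splicing true $f$-orbit segments of length $n$ between consecutive $x_j$ produces a genuine $\delta$-pseudo-orbit for $f$ (the only nonzero jumps are $d(f^n(x_j),x_{j+1})\le\gamma+\delta/2<\delta$) whose term at index $mn$ is $b$. A shadowing point $z$ then satisfies $d(z,a)<\epsilon/3$ and $d(f^{mn}(z),b)<\epsilon/3$, while iterating $\rho(f^n,id_X)<\gamma$ gives $d(f^{mn}(z),z)=d((f^n)^m(z),z)<m\gamma<\epsilon/3$; the triangle inequality then forces $d(a,b)<\epsilon$, a contradiction.

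The main obstacle in this direction is precisely the linear accumulation of error in the estimate $d((f^n)^m(z),z)<m\gamma$: a single iterate $f^n$ is only approximately the identity, so repeating it $m$ times degrades control. The remedy is the order of quantifiers flagged above—pin down the chain (hence $m$) from $\delta$ first, and only afterwards choose $n$, using uniform rigidity to make the per-step drift $\gamma$ as small as desired relative to the now-fixed $m$. This is exactly the strength beyond mere recurrence (where Corollary \ref{recurrentanalogue} still permits connected minimal sets such as irrational rotations): uniform rigidity furnishes one return time $n$ valid at every point simultaneously, which is what lets the spliced segments close up near their start.

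For the reverse direction, suppose $X$ is totally disconnected and fix $\epsilon>0$. Choose a finite partition $\mathcal U=\{U_j\}$ into pairwise disjoint clopen sets of diameter less than $\epsilon$, and let $\eta$ be smaller than the minimum distance between distinct cells. By uniform rigidity pick $n>0$ with $\rho(f^n,id_X)<\eta$, so that $f^n(x)$ always lies in the same cell as $x$. I would then pass to the common refinement $\mathcal V=\bigvee_{i=0}^{n-1}f^{-i}\mathcal U$, again a finite clopen partition into sets of diameter less than $\epsilon$. A short computation shows $f$ maps each cell of $\mathcal V$ into a single cell of $\mathcal V$: if $x$ lies in the cell determined by $f^i(x)\in U_{j_i}$ for $i<n$, then the cell of $f(x)$ is determined by $f^i(f(x))\in U_{j_{i+1}}$ for $i<n-1$ together with $f^{n-1}(f(x))=f^n(x)\in U_{j_0}$, the last membership being exactly where $\rho(f^n,id_X)<\eta$ enters. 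Taking $\delta$ below the separation of $\mathcal V$-cells, any $\delta$-pseudo-orbit $\seq{x_i}$ has the property that $x_{i+1}$ lies in the unique $\mathcal V$-cell containing the image of the cell of $x_i$; an easy induction then shows $f^i(x_0)$ and $x_i$ share a $\mathcal V$-cell for every $i$, whence $d(f^i(x_0),x_i)<\epsilon$ and $x_0$ itself $\epsilon$-shadows $\seq{x_i}$. The conceptual content here is that, although $f$ need not respect $\mathcal U$, the refinement $\mathcal V$ is $f$-compatible once $f^n$ preserves the cells of $\mathcal U$, reducing the dynamics at scale $\epsilon$ to a permutation of finitely many clopen sets—at which point shadowing by $x_0$ is immediate, just as in Theorem \ref{identity}. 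Notably, this argument uses only continuity of $f$ and uniform rigidity, not that $f$ is a homeomorphism (though uniform rigidity does force this).
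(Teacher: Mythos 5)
Your proof is correct, and while the forward direction is essentially the paper's argument with different bookkeeping, your reverse direction takes a genuinely different route. For shadowing $\Rightarrow$ totally disconnected, the paper also splices orbit segments of length $N$ between the links of a $\delta/2$-chain from $p$ to $q$; but instead of fixing the number of links $m$ first and then choosing $n$ with $\rho(f^n,id_X)<\epsilon/(3m)$ to control the accumulated drift $d((f^n)^m(z),z)<m\gamma$, the paper parks the pseudo-orbit at $q$ from index $K$ onward (so the shadowing point satisfies $d(f^{kN}(z),q)<\epsilon/3$ for \emph{all} $k\geq K$) and then invokes uniform rigidity a second time to find a late return time $t\geq K$ with $\rho(f^{tN},id_X)<\epsilon/3$. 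The two devices are interchangeable; yours uses one application of uniform rigidity with a more careful order of quantifiers, the paper's uses two applications but never needs to know $m$. For the converse, the paper works directly with the clopen cover $\mathcal U$: it chooses $\delta$ so that $d(a,b)<\delta$ implies $d(f^i(a),f^i(b))<\eta$ for $i\leq N$ and runs an induction on the least failing index $L$, passing through the four-term chain $x_L,\,f^N(x_{L-N}),\,x_{L-N},\,f^{L-N}(x_0),\,f^L(x_0)$. Your refinement $\mathcal V=\bigvee_{i=0}^{n-1}f^{-i}\mathcal U$ replaces that induction with the cleaner structural observation that $f$ permutes (cyclically shifts the itineraries of) the cells of $\mathcal V$ once $f^n$ preserves the cells of $\mathcal U$, after which the shadowing-by-$x_0$ argument is the same as for the identity map; this buys a more transparent proof at the cost of introducing the refined partition. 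Two trivial points you should make explicit in a write-up: the spliced finite chain must be extended to an infinite pseudo-orbit (e.g.\ by the true orbit of $b$) before shadowing is applied, and in the refinement argument one should discard the empty cells of $\mathcal V$ before taking $\delta$ below the minimal separation of the remaining (finitely many, pairwise disjoint, compact) cells.
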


\begin{proof}
	First, suppose that $(X,f)$ is uniformly rigid and has shadowing. Fix $p\in X$ and let $C$ be the component of $p$ in $X$.
	
	Fix $q\in C$ and let $\epsilon>0$. Since $(X,f)$ has shadowing, fix $\delta>0$ such that every $\delta$-pseudo-orbit is $\epsilon/3$-shadowed. Since $(X,f)$ is uniformly rigid, choose $N$ such that $\rho(f^N,id_X)<\delta/2.$
	
	Now, since $p$ and $q$ belong to the same component of $X$, we can find $K\geq 0$ and a sequence $\seq{a_i}_{i\leq K}$ in $X$ such that $a_0=p$, $a_K=q$ and for all $i<K$, $d(a_i,a_{i+1})<\delta/2$. We can extend this to an infinite sequence $\seq{a_i}_{i\in\omega}$ by defining $a_i=q$ for all $i>K$. From this sequence, we construct the sequence $\seq{c_i}_{i\in\omega}$ by defining, $c_i=f^{i-jN}(a_j)$ where $jN\leq i<(j+1)N$. It is not difficult to see that this is a $\delta$-pseudo-orbit for $f$: for $i$ which are not congruent to $N-1$ modulo $N$, we have $d(f(c_i),c_{i+1})=d(c_{i+1},c_{i+1})=0<\delta$ and otherwise $d(f(c_i),c_{i+1})=d(f(f^{N-1}(a_j)),a_{j+1})=d(f^N(a_j),a_{j+1})\leq d(f^N(a_j),a_{j})+ d(a_j,a_{j+1})<\delta$.
	
	By choice of $\delta$, we can find $z\in X$ which $\epsilon/3$-shadows $\seq{c_i}$. In particular, $d(z,p)<\epsilon/3$, and for all $k\geq K$, $d(q,f^{kN}(z))=d(c_{kN},f^{kN}(z))<\epsilon/3$.
	
	To complete the proof, choose $t\geq K$ such that $\rho(f^t,id_X)<\epsilon/(3N)$. It is easy to verify then, that $\rho(f^{tN},id_X)<\epsilon/3$ and thus 
	\[d(p,q)\leq d(p,z)+d(z,f^{tN}(z))+d(f^{tN}(z),q)<\epsilon.\]
	Since this holds for all $\epsilon>0$, we see that $d(p,q)=0$, i.e. $q=p$ and therefore $C=\{p\}$. Thus, $X$ has no non-degenerate components and is totally disconnected.
	
	Conversely, suppose that $(X,f)$ is uniformly rigid and that $X$ is totally disconnected. Fix $\epsilon>0$. Since $X$ is compact and metric, we can find a finite cover $\mathcal U=\{U_j:j\leq n \}$ of $X$ consisting of pairwise disjoint open sets of diameter less than $\epsilon$.
	
	Fix $0<\eta<\min\{d(U_j,U_k):j\neq k\}$. Now, since $(X,f)$ is uniformly rigid, choose $N>0$ with $\rho(f^N,id_X)<\eta$ and, by uniform continuity, choose $\delta>0$ such that if $d(a,b)<\delta$, then $d(f^i(a),f^i(b))<\eta$ for all $i\leq N$. 
	
	Let $\seq{x_i}$ be a $\delta$-pseudo-orbit. We claim that the orbit of $x_0$ $\epsilon$-shadows $\seq{x_i}$. Indeed, for each $j\in\omega$ and $k\leq N$, we have $d(f^k(f(x_j)),f^k(x_{j+1}))<\eta$ since $d(f(x_i),x_{i+1})<\delta$. It follows that for each $k\leq N$ and $i\in\omega$, the sequence $f^k(x_{i-k}),f^{k-1}(x_{i-k+1}),\ldots x_{i}$ has the property that consecutive terms are within $\eta$ of one another, and hence there exists a unique index $j(i)$ such that each of these terms belongs to $U_{j(i)}$.
	
	We claim that $f^i(x_0)\in U_{j(i)}$ for all $i\in\omega$. Indeed, by taking $k=i$ in the above observation, we see that the claim holds for $i\leq N$. Now, suppose that the claim fails and let $L$ be the least index at which it fails. Since $L>N$, $N-L\geq 0$ and each of $ d\left(x_L,f^N(x_{L-N})\right)$, $d\left(f^N(x_{L-N}),x_{L-N}\right)$, $d\left(x_{L-N},f^{L-N}(x_0)\right)$, and $d\left(f^{L-N}(x_0),f^{L}(x_0)\right)$ is less than $\eta$ (the first by the observation in the previous paragraph, the second and fourth by choice of $N$ and uniform rigidity, and the third by minimality if $L$). Therefore, each of $x_L, f^N(x_{L-N}), x_{L-N}, f^{L-N}(x_0)$ and $f^L(x_0)$ each belong to the same element of $\mathcal U$. In particular, $f^L(x_0)\in U_{j(L)}$, a contradiction.
	
	Thus, for all $i\in\omega$, both $f^i(x)$ and $x_i$ belong to $U_{j(i)}$ and therefore $d(f^i(x_0),x_i)<\epsilon$. This establishes that the orbit of $x_0$ $\epsilon$-shadows $\seq{x_i}$ and thus $(X,f)$ has shadowing.

\end{proof}


\section{Rigid Spaces and Shadowing} \label{Rigid Spaces}

As mentioned in the introduction, there is a significant body of work devoted to determining the prevalence of shadowing in $\cont(X)$ for various categories of topological spaces $X$. In particular, shadowing has been shown to be a generic property of dynamical systems on manifolds (\cite{Yano, Odani, Pilyugin-Plam, Mizera, Mazur-Oprocha}), dendrites (\cite{BMR-Dendrites}), and more exotic locally connected continua (\cite{Meddaugh-Genericity}). In addition, shadowing has been shown to be generic in the space of \emph{surjections} ($\surj(X)$) on spaces in those classes.

In this section, we use the results of Section \ref{Rigid systems} to classify those spaces $X$ which have the property that \emph{every} map (in $\cont(X)$ and in $\surj(X)$) as well as those in which \emph{essentially no} map has shadowing. Of particular note is the demonstration of a compact metric space having the property that shadowing is not a dense property in $\cont(X)$.

We begin by describing a system on the Cantor set which does not have shadowing.

\begin{lemma} \label{cantorexample}
	Let $C$ denote the standard middle-third Cantor set in the interval $[0,1]$ and define $t:C\to C$ by 
	\[t(x)=\begin{cases}
		3x & x\leq1/3\\
		0 & x\geq2/3.
	\end{cases}\]
Then $t$ is a continuous surjection and $(C,t)$ does not have shadowing.
\end{lemma}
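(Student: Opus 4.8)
The plan is to dispatch the topological claims and then, for a fixed $\epsilon$ and each $\delta>0$, to build a single $\delta$-pseudo-orbit that no genuine orbit can $\epsilon$-shadow. For continuity and surjectivity, note that since $C\cap(1/3,2/3)=\ns$, every point of $C$ lies in exactly one of the clopen sets $C\cap[0,1/3]$ and $C\cap[2/3,1]$; on the first $t$ coincides with the continuous map $x\mapsto 3x$ and on the second with the constant map $0$, so $t$ is continuous. Since $C\cap[0,1/3]=\tfrac13 C$, tripling gives $t(C\cap[0,1/3])=3\cdot\tfrac13 C=C$, so $t$ already maps the left piece onto all of $C$ and is therefore a continuous surjection.

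The conceptual heart is a description of genuine orbits: \emph{every point of $C$ is mapped to the fixed point $0$ in finitely many steps}. Writing $x\in C$ in ternary as $0.b_1b_2\cdots$ with each $b_i\in\{0,2\}$, the map $t$ acts as the shift while the leading digit is $0$ and sends $x$ to $0$ as soon as the leading digit is $2$; hence if the first $2$ occurs in position $k$, then $t^k(x)=0$, while $t(0)=0$. Consequently every orbit $\seq{f^i(z)}$ is eventually constantly equal to $0$. This is precisely the feature I would exploit: a pseudo-orbit that stays far from $0$ at arbitrarily large times cannot be shadowed.

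To build such a pseudo-orbit, fix $\epsilon=1/2$ and let $\delta>0$ be given; choose $N$ with $3^{-N}<\delta$. The point $3^{-N}$ (ternary expansion: $N$ zeros followed by all $2$'s) lies in $C$, and $t(3^{-j})=3^{-(j-1)}$ for $1\le j\le N$, with $t(3^{-1})=t(1/3)=1$ and $t(1)=0$. I would then take $\seq{x_i}$ to be the infinite concatenation of copies of the excursion $0,3^{-N},3^{-(N-1)},\ldots,3^{-1},1$. Every transition except the reset step from $0$ to $3^{-N}$ is exact (in particular $1$ feeds forward to $0$ since $t(1)=0$), and the reset step has $d(t(0),3^{-N})=3^{-N}<\delta$; thus $\seq{x_i}$ is a $\delta$-pseudo-orbit, and $x_i=1$ for infinitely many indices $i$. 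If some orbit were to $\epsilon$-shadow $\seq{x_i}$, then by the previous paragraph $f^i(z)=0$ for all large $i$; selecting a large $i$ with $x_i=1$ gives $d(f^i(z),x_i)=d(0,1)=1>\epsilon$, a contradiction. As $\delta>0$ was arbitrary, $(C,t)$ has no shadowing.

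I expect the main obstacle to be conceptual rather than computational: recognizing that genuine orbits collapse to $0$ in finite time (so they are eventually constant), whereas the expanding branch $x\mapsto 3x$ lets a pseudo-orbit climb away from $0$ up to $1$ using only perturbations of size $3^{-N}$. One must also notice why a single excursion is harmless --- it is shadowed by the orbit of the point $3^{-(N+1)}$ --- so the construction must revisit $1$ at arbitrarily late times, which is exactly what the eventually-constant genuine orbits cannot match.
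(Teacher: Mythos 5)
Your proof is correct and takes essentially the same approach as the paper's: both construct, for each $\delta>0$, a cyclic $\delta$-pseudo-orbit that climbs from $3^{-N}$ (with $3^{-N}<\delta$) up to $1$ along the exact expanding branch and resets with a single jump of size $3^{-N}$, then observe that every genuine orbit is eventually constantly $0$ and therefore sits at distance $1$ from the pseudo-orbit at infinitely many times. The only differences are cosmetic --- your explicit ternary-digit justification of continuity, surjectivity, and the eventually-zero property, and the harmless inclusion of $0$ in each excursion.
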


\begin{proof}
	It is trivial to see that $t\in\surj(X)$. To see that $(C,t)$ does not have shadowing we proceed as follows.
	
	Fix $1>\epsilon>0$ and choose $\delta>0$. We will now construct a $\delta$-pseudo-orbit for $t$ which cannot be $\epsilon$-shadowed by the orbit of any point. Now, choose $M\in\omega$ such that $1/3^{M}<\delta$ and define the sequence $\seq{p_i}_{i\in\omega}$ by $p_i=\frac{3^{(i\mod{M+1})}}{3^M}$. Observe that if $i\mod{M+1}\neq M$, then $t(p_i)=p_{i+1}$ and if $i\mod{M+1}=M$, then $t(p_i)=0$ and thus $d(p_{i+1},t(p_i))=1/3^M<\delta$, so that $\seq{p_i}$ is a $\delta$-pseudo-orbit for $t$. 
	
	To see that no orbit $\epsilon$-shadows this pseudo-orbit, note that for any $z\in C$, there exists $N\in\omega$ with $t^n(z)=0$ for $n\geq N$. However, if we choose $i\geq N$ with $i\mod{M+1}=M$, then $d(p_i,t^n(z))=d(1,0)\geq\epsilon$.
	
	Thus, there is no $\delta>0$ such that every $\delta$-pseudo-orbit is $\epsilon$-shadowed, and $(C,t)$ does not have shadowing.
\end{proof}

We are now prepared to prove the following.

\begin{theorem} \label{finitesurjcont}
	For a compact metric space $X$, the following are equivalent:
	
	\begin{enumerate}
		\item every map in $\cont(X)$ has shadowing.
		\item every map in $\surj(X)$ has shadowing
		\item $X$ is finite.
	\end{enumerate}  
\end{theorem}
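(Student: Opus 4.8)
The plan is to prove the cycle $(3)\Rightarrow(1)\Rightarrow(2)\Rightarrow(3)$. The implication $(1)\Rightarrow(2)$ is immediate since $\surj(X)\subseteq\cont(X)$. For $(3)\Rightarrow(1)$, suppose $X$ is finite and let $\mu=\min\{d(x,y):x\neq y\}>0$. Given any $f\in\cont(X)$ and any $\epsilon>0$, take $\delta=\mu$: any $\delta$-pseudo-orbit $\seq{x_i}$ satisfies $d(f(x_i),x_{i+1})<\mu$, forcing $x_{i+1}=f(x_i)$, so $\seq{x_i}$ is the genuine orbit of $x_0$ and is $\epsilon$-shadowed by $z=x_0$. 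Thus every map has shadowing.

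The substance is $(2)\Rightarrow(3)$, which I would prove contrapositively: assuming $X$ is infinite, I exhibit a surjection without shadowing. If $X$ is not totally disconnected, then $id_X$ is a surjection that fails to have shadowing by Theorem \ref{identity}, and we are done. So assume $X$ is infinite and totally disconnected, hence zero-dimensional; here I build a surjection $f$ modeled on Lemma \ref{cantorexample}, whose orbits all collapse to a fixed point $w$ while a cycling pseudo-orbit repeatedly visits a fixed clopen set far from $w$. Concretely, I would fix a limit point $w$ and a clopen partition $X\setminus\{w\}=\bigsqcup_{k\geq0}A_k$ with $\mathrm{diam}(A_k)\to0$ and the $A_k$ eventually inside every neighborhood of $w$, and set $f(w)=w$, $f(A_0)=\{w\}$, and $f|_{A_k}$ a continuous surjection onto $A_{k-1}$ for $k\geq1$. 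Such an $f$ is continuous (it is locally constant-free but continuous on each clopen $A_k$, and $f(x)\to w$ as $x\to w$) and surjective (each $A_{k-1}$ is covered by $f(A_k)$, and $w$ by $f(A_0)$). Every orbit reaches $w$ in finitely many steps and stays there.

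With this $f$ in hand the failure of shadowing is exactly as in Lemma \ref{cantorexample}. Fixing $\epsilon<d(A_0,\{w\})$ and, for a given $\delta>0$, an index $N$ with $A_N\subseteq B_\delta(w)$, the pseudo-orbit that rides the conveyor from a point of $A_N$ out to $A_0$, then jumps back into $A_N$ (an error of at most $d(w,A_N)<\delta$), and repeats, is a $\delta$-pseudo-orbit meeting $A_0$ infinitely often; since every true orbit is eventually equal to $w$ and $d(w,A_0)>\epsilon$, no orbit $\epsilon$-shadows it. As this works for every $\delta$, the system $(X,f)$ lacks shadowing.

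The main obstacle is realizing the level maps $f|_{A_k}$ as genuine continuous \emph{surjections} $A_k\to A_{k-1}$, since an arbitrary clopen set need not surject onto another. I would resolve this using Cantor--Bendixson. If $X$ is uncountable I choose $w$ in the (nonempty) perfect kernel $P$ and arrange each $A_k$ to meet $P$, so that each $A_k$ is an uncountable zero-dimensional compact space; such a space continuously surjects onto $2^\omega$ (by a binary clopen-splitting argument), and $2^\omega$ surjects onto every nonempty compact metric space, so $A_k$ surjects onto the arbitrary set $A_{k-1}$, supplying the required maps. If $X$ is countable, its derived set $X'$ is a nonempty countable compact space, hence not perfect, so it has a point $w$ isolated in $X'$; a small clopen neighborhood of $w$ is then a clopen convergent sequence $W$, and I take $f$ to be the collapse map of the type in Lemma \ref{cantorexample} on $W$ and the identity on $X\setminus W$. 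Since $W$ is clopen and $f$-invariant, any shadowing of $(X,f)$ would descend to the subsystem $(W,f|_W)$ (a pseudo-orbit in $W$ is shadowed by a point whose orbit, for $\epsilon<d(W,X\setminus W)$, cannot leave $W$), contradicting that $(W,f|_W)$ has no shadowing. Verifying this descent and the two surjectivity facts is where the real work lies.
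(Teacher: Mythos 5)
Your proof is correct, and for the substantive implication $(2)\Rightarrow(3)$ it takes a genuinely different route from the paper. Both arguments first dispose of the non--totally-disconnected case via Theorem \ref{identity} and then must produce a surjection without shadowing on an infinite totally disconnected space; the difference lies in how that surjection is built. The paper splits on whether $X$ has finitely or infinitely many isolated points: in the first case the set of non-isolated points is exactly a Cantor set, so the explicit example of Lemma \ref{cantorexample} transfers by conjugation (extended by the identity on the finitely many isolated points); in the second case it shifts a convergent sequence of isolated points into its limit. You instead split on countability: for countable $X$ your argument (isolated point of the derived set, clopen convergent sequence, shift-and-collapse, descent of shadowing to the invariant clopen subsystem) essentially reproduces the paper's second case, while for uncountable $X$ you build a single ``conveyor belt'' $X\setminus\{w\}=\bigsqcup_{k}A_k$ collapsing to a fixed point $w$. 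The cost of this more uniform construction is the machinery needed to realize the level maps $A_k\to A_{k-1}$ as continuous surjections --- Cantor--Bendixson to guarantee each $A_k$ is uncountable, a retraction of a zero-dimensional compactum onto its perfect kernel, and the Alexandroff--Hausdorff surjection of the Cantor set onto an arbitrary nonempty compactum --- whereas the paper gets by with the one explicit map $t$ of Lemma \ref{cantorexample}. What your approach buys is a construction indifferent to how the isolated points are distributed, and a cleaner contrapositive framing; the paper's is more elementary and self-contained. Two small points to tighten: ``binary clopen splitting'' alone can get stuck on singleton pieces, so the surjection $A_k\to 2^\omega$ is best obtained by retracting onto the perfect kernel of $A_k$; and you should note explicitly that the $A_k$ can be arranged both nonempty and meeting the perfect kernel of $X$ by passing to a suitable subsequence of a clopen neighborhood base of $w$.
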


\begin{proof}
	Let $X$ be a compact metric space.
	
	Clearly, if every map in $\cont(X)$ has shadowing, then so does every map in $\surj(X)$.
	
	Now, suppose that each map in $\surj(X)$ has shadowing. In particular, the identity map on $X$ has shadowing, and therefore by Theorem \ref{identity}, $X$ is totally disconnected. We now have two cases to consider---either there are finitely many isolated points in $X$ or there are infinitely many. 
	
	First, suppose that there are finitely many isolated points in $X$ and let $F$ be the set of these points. Then $X\setminus F$ is compact, totally disconnected and has no isolated points, and hence is a Cantor set \cite{Nadler}. Let $(C,t)$ be the system from Lemma \ref{cantorexample} and let $h:X\setminus F\to C$ be a homeomorphism. Define $f:X\to X$ by $f(x)=x$ if $x\in F$ and $f(x)=h^{-1}(t(h(x)))$ if $x\in X\setminus F$. It is easily seen that since $(C,t)$ does not have shadowing, then $(X,f)$ also does not, contradicting our assumption.
	
	Thus, $X$ must have infinitely many isolated points. Since $X$ is compact metric, we can enumerate a sequence $\seq{p_i}_{i\in\omega}$ of distinct isolated points which converges to a point $p_\infty\in X$. We then define $f:X\to X$ by taking $f(x)=x$ if $x\notin\{p_i:i\in\omega\}$, $f(p_0)=p_\infty$, and $f(p_i)=p_{i-1}$ for $i>0$.. It is easy to see that this is a continuous surjection on $X$ and, by an argument similar to that in Lemma \ref{cantorexample}, $(X,f)$ does not have shadowing, again contradicting out assumption, thus establishing that $X$ is finite.	
	
	Finally, assume that $X$ is finite. Let $f\in \cont(X)$ and fix $\epsilon>0$. Let $\delta>0$ such that if $a\neq b\in X$, then $d(a,b)\geq\delta$. Then every $\delta$-pseudo-orbit is a true orbit and is trivially shadowed by its initial point.
\end{proof}

It is worth pointing out that we can prove that $X$ being finite implies that every map $\surj(X)$ has shadowing can also be proven by appealing to the results of the previous section. In particular, if $X$ is finite and $f\in\surj(X)$, then $f$ is a permutation and therefore there exists $N>0$ such that $f^N=id_X$, and therefore by Theorem \ref{periodic}, $f$ has shadowing since $X$ is finite.

%

%

We now turn our attention to the analysis of those spaces in which essentially no map has shadowing. We begin by noting that if $X$ is a compact metric space and $c\in X$, then the constant map $x\mapsto c$ has shadowing, and therefore $\cont(X)$ will always contain at least some maps with shadowing. 

The following examples of Cook will be useful. Through an intricate process using inverse limits to `blow-up' points into solenoids, Cook developed a family of continua with very few non-constant self-maps. We list those properties of these continua which are relevant to our discussion in the following remark.

\begin{remark}[Cook, \cite{Cook}] \label{Cook}
For each $n\in \N$, there exists a non-degenerate continuum $H_n$ such that there exist $n$ and only $n$ elements of $\surj(H_n)$, each of which is a homeomorphism. Furthermore, if $f\in\cont(H_n)\setminus\surj(H_n)$, then $f(H_n)$ is a singleton.

Additionally, there is a continuum $H_\infty$ such that $\surj(H_\infty)$ is homeomorphic to the Cantor set and each element is a homeomorphism. Furthermore, if $f\in\cont(H_\infty)\setminus\surj(H_\infty)$, then $f(H_\infty)$ is a singleton.
\end{remark}

These examples allow us to answer a few open questions concerning shadowing. In particular, in \cite{Meddaugh-Genericity} we asked whether there were, in fact any continua $X$ such that shadowing is not generic in $\cont(X)$.

\begin{theorem} \label{HnNondense}
	There exist continua with the property that shadowing is not dense in $\cont(X)$. In particular, for each $n\in\N$, the continuum $H_n$ has the property that shadowing is not dense in $\cont(X)$.
\end{theorem}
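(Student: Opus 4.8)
The plan is to exploit the extreme rigidity of the Cook continua recorded in Remark \ref{Cook}: since $\cont(H_n)$ consists only of finitely many homeomorphisms together with constant maps, I can pin down exactly which maps have shadowing and then show that this collection fails to be dense.

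First I would observe that $\surj(H_n)$ is a group under composition. The composition of two homeomorphisms is a homeomorphism and so lies in $\surj(H_n)$; the identity $id_{H_n}$ is a homeomorphism and hence is one of the $n$ elements of $\surj(H_n)$; and the inverse of each element is again a homeomorphism, hence again in $\surj(H_n)$. Thus $\surj(H_n)$ is a finite group of order $n$. Consequently, for each $f\in\surj(H_n)$ the powers $\seq{f^k}$ all lie in this finite set, so two of them coincide, and since $f$ is invertible this forces $f^m=id_{H_n}$ for some $m>0$. In other words, every element of $\surj(H_n)$ is a \emph{periodic} system.

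Next I would apply Theorem \ref{periodic}. Since $H_n$ is a non-degenerate continuum it is connected and contains more than one point, hence is not totally disconnected, so no periodic map on $H_n$ has shadowing; in particular no element of $\surj(H_n)$ has shadowing. Combined with Remark \ref{Cook}, which guarantees that every map in $\cont(H_n)\setminus\surj(H_n)$ is a constant map, and with the fact that every constant map has shadowing, this shows that the maps in $\cont(H_n)$ with shadowing are \emph{precisely} the constant maps.

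Finally I would show that the constant maps are not dense in $\cont(H_n)$. Let $D>0$ be the diameter of $H_n$ and fix $a,b\in H_n$ with $d(a,b)=D$. For any $g\in\surj(H_n)$ and any constant map with value $c$, surjectivity of $g$ gives $\rho(g,c)=\sup_{y\in H_n}d(y,c)\geq\max\{d(a,c),d(b,c)\}\geq D/2$, since $d(a,c)+d(c,b)\geq D$. Hence the open ball of radius $D/2$ about $g$ in $\cont(H_n)$ contains no constant map, and therefore no map with shadowing, so shadowing is not dense. The main point of the argument is the first step: recognizing that finiteness of $\surj(H_n)$ forces every non-constant self-map to be periodic, which is exactly what lets Theorem \ref{periodic} be brought to bear.
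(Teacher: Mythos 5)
Your proof is correct and follows essentially the same route as the paper: both rest on Cook's description of $\cont(H_n)$ as $n$ homeomorphisms plus constant maps, both exhibit a surjection without shadowing, and both conclude by noting that surjections are isolated from the constant maps. The paper takes the shortcut of applying Theorem \ref{identity} to $id_{H_n}$ alone, whereas you prove the stronger fact that \emph{every} element of $\surj(H_n)$ is periodic and hence lacks shadowing by Theorem \ref{periodic} --- this is exactly the argument the paper reserves for Theorem \ref{HnNoShadowingInSurj}, and your explicit $D/2$ bound usefully fills in the isolation claim the paper leaves terse.
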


\begin{proof} 
	Fix $n\in \N$ and note that $\surj(H_n)$ has a exactly $n$ elements, each which is isolated in $\cont(H_n)$ since $\cont(H_n)\setminus\surj(H_n)$ contains only the constant maps. In particular, $id_{H_n}$ is one of these maps. Since $H_n$ is not totally disconnected, by Theorem \ref{identity}, $(H_n,id_{H_n})$ does not have shadowing. Since $id_{H_n}$ is isolated in $\cont(H_n)$, we see that shadowing is not dense in $\cont(H_n)$.
\end{proof}

In \cite{meddaugh2021shadowing} we demonstrated that for a large class of spaces, the shadowing property is equivalent to the \emph{continuously generated pseudo-orbit tracing property (CGPOTP)}. Briefly, a system $(X,f)$ has CGPOTP if for all $\epsilon>0$ there exists $\delta>0$ such that if $f,g\in\cont(X)$ with $\rho(f,g)<\delta$, then every $g$-orbit is $\epsilon$-shadowed by an $f$-orbit. It was left open in that paper whether there were any systems with CGPOTP, but not shadowing.

\begin{theorem} \label{HnNoShadowingInSurj}
	For each $n\in\N$, and $f\in\surj(H_n)$, the system $(H_n,f)$ has CGPOTP but does not have shadowing.
\end{theorem}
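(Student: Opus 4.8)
The plan is to treat the two assertions separately, exploiting the rigid structure of $\surj(H_n)$ recorded in Remark \ref{Cook}. For the failure of shadowing, I would first observe that $\surj(H_n)$ is a finite group under composition: it is closed under composition (a composite of continuous surjections is again a continuous surjection), it contains $id_{H_n}$, and since each of its $n$ elements is a homeomorphism it is closed under taking inverses. Consequently every $f\in\surj(H_n)$ has finite order, so there is a $k>0$ with $f^k=id_{H_n}$; that is, $(H_n,f)$ is periodic. Since $H_n$ is a non-degenerate continuum it is connected and in particular not totally disconnected, so Theorem \ref{periodic} immediately yields that $(H_n,f)$ does not have shadowing.

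For the CGPOTP, the key point is that $f$ is isolated in $\cont(H_n)$, as already noted in the proof of Theorem \ref{HnNondense}. I would spell this out: the remaining $n-1$ surjections are finitely many and distinct from $f$, hence bounded away from it in $\rho$, while for every constant map $\mathrm{const}_c\colon x\mapsto c$ surjectivity of $f$ gives $\rho(f,\mathrm{const}_c)=\sup_{x}d(f(x),c)=\sup_{y\in H_n}d(y,c)$, and since $H_n$ is non-degenerate this supremum is bounded below by $\tfrac12\,\mathrm{diam}(H_n)>0$, uniformly in $c$. Because $\cont(H_n)$ consists of $\surj(H_n)$ together with the constant maps only, these two estimates produce a $\delta_0>0$ such that the sole $g\in\cont(H_n)$ with $\rho(f,g)<\delta_0$ is $g=f$ itself.

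Given this, CGPOTP follows trivially: for any $\epsilon>0$ take $\delta=\delta_0$. If $g\in\cont(H_n)$ satisfies $\rho(f,g)<\delta$, then $g=f$, so any $g$-orbit $\seq{f^i(x)}$ is an $f$-orbit and is $\epsilon$-shadowed by itself (take $z=x$, whence $d(f^i(z),f^i(x))=0<\epsilon$). Hence $(H_n,f)$ has CGPOTP.

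I expect the only genuine work to lie in the CGPOTP half, and specifically in making the isolation of $f$ precise—above all in verifying that $f$ is uniformly bounded away from the entire family of constant maps, which rests on surjectivity of $f$ together with non-degeneracy of $H_n$. Once that is in hand the CGPOTP requirement is met vacuously, since no map distinct from $f$ lies within $\delta_0$ of it. The conceptual content of the theorem is exactly this contrast: shadowing fails for topological reasons (a periodic map on a connected continuum), whereas CGPOTP holds for the trivial reason that $f$ admits no nearby perturbations, so $(H_n,f)$ furnishes the sought-after system separating CGPOTP from shadowing.
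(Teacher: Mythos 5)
Your proposal is correct and follows essentially the same route as the paper: failure of shadowing via the finite-group/periodicity argument and Theorem \ref{periodic}, and CGPOTP via the isolation of $f$ in $\cont(H_n)$, which makes every nearby continuously generated pseudo-orbit a true orbit. The only difference is that you spell out the isolation estimate (distance to the other surjections and the uniform lower bound $\tfrac12\,\mathrm{diam}(H_n)$ on the distance to constant maps), which the paper leaves implicit.
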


\begin{proof}
	Fix $n\in \N$ and $f\in\surj(H_n)$.	
	
	To observe that $(H_n,f)$ does not have shadowing, note that $\surj(H_n)$ consists only of homeomorphisms and therefore is a group under composition. Since it is finite, every element is of finite order and, in particular, there exists $k>0$ with $f^k=id_{H_n}$ and thus $(H_n,f)$ is a periodic system . Since $H_n$ is not totally disconnected, $(H_n,f)$ does not have shadowing.
	
	To see that $(H_n,f)$ has CGPOTP, since $f$ is isolated in $\cont(X)$, we can choose $\delta>0$ such that the set $\{g\in\cont(X):\rho(f,g)<\delta\}=\{f\}$. Now, if we fix $\epsilon>0$, then any continuously generated $\delta$-pseudo-orbit for $f$ is a true orbit for $f$, and is therefore trivially shadowed by the orbit of its initial term.
\end{proof}

The observant reader may have noticed that we have made no claims regarding the properties of $H_\infty$. In Theorems \ref{HnNondense} and \ref{HnNoShadowingInSurj}, since $\surj(H_n)$ is finite, we were able to demonstrate that the surjections in $\cont(X)$ were periodic and apply Theorems \ref{identity} and \ref{periodic}. Since $\surj(H_\infty)$ is not finite, this technique cannot work. In order to generalize this, observe that for each $n\in\N$, each map in $\surj(H_n)$ is periodic and hence uniformly rigid. As it happens, this is the relevant property to generalize the preceding results.

\begin{definition}
	A compact metric space $X$ is \emph{rigid} provided that for each map $f\in\surj(X)$, the system $(X,f)$ is uniformly rigid.
\end{definition} 

Examples of rigid spaces include finite spaces as well as the $H_n$ continua of Cook. As we shall see later, the $H_\infty$ continuum is also rigid.

\begin{lemma} \label{totallydisconnectedrigid}
	A totally disconnected space is rigid if and only if it is finite.
\end{lemma}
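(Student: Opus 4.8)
The plan is to prove the two directions of the biconditional separately, and the forward direction (rigid $\Rightarrow$ finite) is where essentially all the work lies.

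First I would dispose of the easy direction. If $X$ is finite, then every $f\in\surj(X)$ is a permutation of the (finitely many) points of $X$, so some iterate $f^N$ equals $id_X$; thus $(X,f)$ is periodic and hence uniformly rigid (a subsequence of $\seq{f^i}$, namely the constant subsequence $f^N=f^{2N}=\cdots$, converges uniformly to $id_X$). Therefore $X$ is rigid.

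For the forward direction I would argue by contraposition: assume $X$ is totally disconnected and infinite, and exhibit an $f\in\surj(X)$ for which $(X,f)$ fails to be uniformly rigid. The natural source for such an $f$ is precisely the machinery already built in the proof of Theorem \ref{finitesurjcont}. Since $X$ is compact, metric, totally disconnected and infinite, it contains a convergent sequence of distinct points, and I would split into the same two cases used there: either $X$ has infinitely many isolated points, or it has only finitely many (in which case, after removing them, the remainder is a Cantor set and I can transport the map $t$ of Lemma \ref{cantorexample}, or better, directly build a shift-type map on a convergent sequence). In each case the constructed surjection $f$ has a point $p$ with $f^k(p)$ eventually landing on (or converging to) a fixed point while $p$ itself is bounded away from that orbit tail, so no subsequence of $\seq{f^i}$ can converge uniformly to $id_X$. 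Concretely, if $\seq{p_i}$ are distinct isolated points converging to $p_\infty$ and $f$ shifts $p_i\mapsto p_{i-1}$ with $p_0\mapsto p_\infty$ and fixes everything else, then for the fixed $\epsilon<\tfrac12 d(p_0,p_\infty)$ there is no $n>0$ with $d(f^n(x),x)<\epsilon$ simultaneously at all $x$: evaluating at a suitable $p_i$ forces $f^n$ to move that point a definite distance, so $\rho(f^n,id_X)\geq\epsilon$ for every $n$, contradicting uniform rigidity.

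The main obstacle is ensuring that the chosen $f$ lies in $\surj(X)$, i.e.\ is a genuine continuous surjection, in the finitely-many-isolated-points case; there the transported Cantor map $t$ is surjective but one must check the splicing across $F$ preserves both continuity and surjectivity, exactly as in the proof of Theorem \ref{finitesurjcont}. Once surjectivity is secured, the failure of uniform rigidity is a short quantitative estimate as above. Thus a totally disconnected rigid space cannot be infinite, completing the contrapositive and hence the lemma.
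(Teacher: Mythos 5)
Your proposal is correct and follows essentially the same route as the paper: the finite direction via permutations having a periodic iterate (hence uniform rigidity), and the infinite direction by recycling the surjections from Lemma \ref{cantorexample} and Theorem \ref{finitesurjcont}, which each have a point whose orbit is eventually constant at a distinct point, so that $\rho(f^n,id_X)$ is bounded away from zero and no subsequence of $\seq{f^n}$ can converge to $id_X$.
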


\begin{proof}
	Let $X$ be a finite space. Since $\surj(X)$ is finite and consists only of homeomorphisms, as in the proof of Theorem \ref{HnNoShadowingInSurj}, each map in $\surj(X)$ is periodic and hence uniformly recurrent. Thus, $X$ is rigid.
	
	Now, assume that $X$ is totally disconnected and infinite. It is easy to verify that a continuous surjection constructed in the fashion of the maps constructed in the proofs of Lemma \ref{cantorexample} and Theorem \ref{finitesurjcont} is not uniformly rigid, as (in both constructions) there exists points $x\neq y\in X$ and $n\in\N$ such that $\{f^n(x): n\geq N\}=\{y\}$, so that no subsequence of $\seq{f^n}$ can converge to $id_X$.
\end{proof}
%
%

By Theorem \ref{finitesurjcont}, then, finite rigid spaces have the property that \emph{every} system has shadowing. This stands in stark contrast to the infinite case in which \emph{no} surjective system has shadowing.

\begin{theorem} \label{rigidshadowing}
	Let $X$ be an infinite rigid space and $f\in\surj(X)$. Then $(X,f)$ does not have shadowing.
\end{theorem}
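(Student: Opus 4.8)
The plan is to derive the conclusion by chaining together the two main results established just before this statement, namely Theorem \ref{rigidsystem} and Lemma \ref{totallydisconnectedrigid}. The overall strategy is to argue by contradiction: assuming shadowing, rigidity of the space will first force the system to be uniformly rigid, and then shadowing will pin down the topology of $X$ so tightly that $X$ must be finite, contradicting the hypothesis.

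First I would unpack the hypotheses. Since $X$ is a rigid space and $f\in\surj(X)$, the definition of rigidity applied to the surjection $f$ immediately gives that the system $(X,f)$ is uniformly rigid. This is the only point at which the assumption that $f$ is surjective (rather than an arbitrary element of $\cont(X)$) is used, and it is essential, since rigidity is by definition a statement only about the maps in $\surj(X)$.

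Next I would suppose, for contradiction, that $(X,f)$ has shadowing. Then $(X,f)$ is a uniformly rigid system with shadowing, so Theorem \ref{rigidsystem} applies and yields that $X$ is totally disconnected. But $X$ is rigid by hypothesis, so Lemma \ref{totallydisconnectedrigid}, which asserts that a totally disconnected space is rigid if and only if it is finite, forces $X$ to be finite. This contradicts the assumption that $X$ is infinite, and the proof is complete.

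I do not expect a genuine obstacle here: the statement is essentially a corollary of the machinery developed in the preceding sections, and all of its content lies in correctly invoking Theorem \ref{rigidsystem} and Lemma \ref{totallydisconnectedrigid} in sequence. The one step that warrants explicit attention is verifying that the hypotheses of Theorem \ref{rigidsystem} are actually met, that is, confirming that $(X,f)$ is uniformly rigid; but as noted above this is immediate from the definition of a rigid space.
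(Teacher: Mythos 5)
Your proof is correct and uses exactly the two ingredients the paper does (Theorem \ref{rigidsystem} and Lemma \ref{totallydisconnectedrigid}); the only difference is that you argue by contradiction while the paper argues directly, first concluding from the lemma that $X$ is not totally disconnected and then applying the theorem. This is essentially the same proof.
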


\begin{proof}
	Let $X$ be an infinite rigid space and $f\in\surj(X)$. By Lemma \ref{totallydisconnectedrigid}, $X$ is not totally disconnected. Since $X$ is rigid, $(X,f)$ is uniformly rigid, and since $X$ is not totally disconnected, Theorem \ref{rigidsystem} gives us that $(X,f)$ does not have shadowing.
\end{proof}

Of course, it is not immediately apparent how to determine whether a space $X$ is rigid. Fortunately, compactness of $\surj(X)$ is sufficient. 

\begin{theorem} \label{rigidcompact}
	Let $X$ be a compact metric space. If $\surj(X)$ is compact, then $X$ is rigid.
\end{theorem}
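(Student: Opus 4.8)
The goal is to show that compactness of $\surj(X)$ forces every surjective system $(X,f)$ to be uniformly rigid. The plan is to fix $f\in\surj(X)$ and study the orbit $\seq{f^n}_{n\in\omega}$ inside the compact space $\surj(X)$—first I need to confirm this sequence actually lives in $\surj(X)$, which follows since a composition of surjections is a surjection. By sequential compactness of $\surj(X)$ (it is compact metric under $\rho$), some subsequence $\seq{f^{n_k}}$ converges uniformly to a limit map $g\in\surj(X)$.

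The key reduction is that, by the second part of the Remark following the preliminaries, $(X,f)$ is uniformly rigid if and only if some subsequence of $\seq{f^n}$ converges uniformly to $id_X$. So after extracting a convergent subsequence with limit $g$, I want to manufacture from it a subsequence converging to the identity. The natural idea is to exploit the group-like structure of the orbit closure: if $f^{n_k}\to g$ and also $f^{m_k}\to g$ along two interleaved subsequences with $m_k-n_k\to\infty$, then differences of exponents should push $f$ toward $id_X$. Concretely, I would take a single convergent subsequence $f^{n_k}\to g$, and then consider the gaps $f^{n_{k+1}-n_k}$. Since composition $\rho(f^{a}\circ h, f^a\circ h')\le \rho(h,h')$ is not available in general (post-composition by $f^a$ need not be nonexpansive), I expect the cleanest route is: from $f^{n_k}\to g$ deduce that $\seq{f^{n_{k+1}-n_k}}$ has the property that $f^{n_{k+1}-n_k}\circ f^{n_k} = f^{n_{k+1}} \to g$, and use uniform continuity/equicontinuity coming from compactness to transfer convergence.

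The main obstacle will be precisely this transfer: converting $f^{n_k}\to g$ into $f^{n_{k+1}-n_k}\to id_X$ requires controlling how the metric $\rho$ behaves under composition on the left. Compactness of $\surj(X)$ should supply the needed uniform equicontinuity of the family $\{f^n\}$—indeed a compact subset of $\cont(X)$ in the sup metric is a uniformly equicontinuous family by the Arzel\`a--Ascoli theorem. With equicontinuity in hand, given $\epsilon>0$ one picks $\eta$ so that $\rho(h,h')<\eta$ implies $\rho(f^a\circ h, f^a\circ h')<\epsilon$ uniformly in $a$ (using equicontinuity of the iterates), and then from a Cauchy subsequence $\rho(f^{n_k},f^{n_j})<\eta$ one concludes $\rho(f^{n_k-n_j},id_X)<\epsilon$ after writing $f^{n_k}=f^{n_k-n_j}\circ f^{n_j}$ and $f^{n_j}=id\circ f^{n_j}$. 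This yields a subsequence of iterates converging uniformly to $id_X$, hence uniform rigidity by the Remark, and since $f$ was arbitrary in $\surj(X)$, the space $X$ is rigid.

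Thus the structure is: (1) note $\seq{f^n}\subseteq\surj(X)$; (2) extract a $\rho$-Cauchy subsequence by compactness; (3) invoke Arzel\`a--Ascoli equicontinuity of the compact family to control left-composition uniformly; (4) convert the Cauchy condition on exponents $n_k>n_j$ into $\rho(f^{n_k-n_j},id_X)$ small, producing iterates near the identity; (5) apply the Remark to conclude uniform rigidity, and hence rigidity of $X$. I anticipate step (3)–(4) to be where the real work lies, since one must be careful that the equicontinuity is genuinely uniform across all the iterates $f^{a}$ simultaneously, which is exactly what compactness of $\surj(X)$ (equivalently of the orbit closure) guarantees.
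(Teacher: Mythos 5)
Your argument reaches the correct conclusion by a genuinely different route from the paper's. The paper never looks at differences of exponents: it forms the orbit closure $I(f)=\overline{\{f^n:n\in\N\}}$, shows it is closed under composition, uses compactness together with Zorn's lemma to find $g\in I(f)$ with $I(g)$ minimal, deduces $I(g^2)=I(g)$ so that $g\in\overline{\{g^{1+n}:n\in\N\}}$, and then extracts a limit $h$ of a subsequence of $\seq{g^{n_i}}$ satisfying $h\circ g=g$, whence $h=id_X$ because $g$ is surjective. That is essentially an Ellis-semigroup/idempotent argument; its payoff is that it generalizes verbatim to non-surjections (yielding the retract-rigidity theorem in Section 5). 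Your Cauchy-subsequence argument is shorter and more elementary, at the price of being tied to surjectivity from the start.

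There is, however, one step whose stated justification does not work, even though the step itself is true. Arzel\`a--Ascoli equicontinuity of the compact family $\{f^a\}$ gives you $\rho(h,h')<\eta\Rightarrow\rho(f^a\circ h,f^a\circ h')<\epsilon$, i.e.\ it lets you post-compose two nearby maps by $f^a$. What step (4) requires is the opposite cancellation: from $\rho\bigl(f^{n_k-n_j}\circ f^{n_j},\,id_X\circ f^{n_j}\bigr)<\eta$ you must strip off the common \emph{inner} factor $f^{n_j}$ to get $\rho(f^{n_k-n_j},id_X)<\epsilon$, and equicontinuity of the outer iterates says nothing about that direction. The step is rescued---and in fact trivialized---by surjectivity: for any $p,p'\in\cont(X)$ and any surjection $q$ one has $\rho(p\circ q,p'\circ q)=\sup_{x\in X}d(p(q(x)),p'(q(x)))=\sup_{y\in X}d(p(y),p'(y))=\rho(p,p')$, so $\rho(f^{n_k-n_j},id_X)=\rho(f^{n_k},f^{n_j})$ exactly and you may take $\eta=\epsilon$ with no appeal to Arzel\`a--Ascoli. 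With that substitution your proof is complete: a convergent subsequence is Cauchy, so for each $\epsilon>0$ there exist $n_j<n_k$ with $\rho(f^{n_k-n_j},id_X)<\epsilon$ and $n_k-n_j>0$, which is precisely uniform rigidity, and $f\in\surj(X)$ was arbitrary. Note that this cancellation is exactly where surjectivity is indispensable (the paper's proof uses the same fact in the final line $h\circ g=g\Rightarrow h=id_X$); for non-surjections compactness only produces a retraction in the limit.
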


\begin{proof}
	Suppose $X$ is a compact metric space with $\surj(X)$ compact and fix $f\in\surj(X)$. 
	
	For each $g\in\surj(X)$, define $I(g)=\overline{\{g^n:n\in\N\}}$ and note that this is a compact subset of $\surj(X)$. We claim that $I(g)$ is closed under composition in the sense that if $p,q\in I(g)$, then $p\circ q\in I(g)$. To observe this, let $p,q\in I(g)$ and fix $\epsilon>0$. Choose $\seq{n_i}_{i\in\omega}$ and $\seq{m_i}_{i\in\omega}$ so that $\seq{g^{n_i}}$ converges to $p$ and $\seq{g^{m_i}}$ converges to $q$ (note that one or both of these may be constant sequences). 
	Now, choose $\delta>0$ (by continuity of $p$) so that if $d(a,b)<\delta$, $d(p(a),p(b))<\epsilon/2$. Finally, choose	 $K\in\omega$ such that for all $i\geq K$, we have both $\rho(g^{n_i},p)<\epsilon/2$ and $\rho(g^{m_i},q)<\delta$. Then, for all $i\geq K$ and $x\in X$, we have
	\[d(g^{n_i+m_i}(x),p(q(x)))\leq d(g^{n_i}(g^{m_i}(x)), p(g^{m_i}(x)))+d(p(g^{m_i}(x)), p(q(x)))<\epsilon,\]
	and therefore $\rho(g^{n_i+m_i},p\circ q)\leq\epsilon$. It follows that $p\circ q\in \overline{\{g^n:n\in\N\}} = I(g)$.
	
	Since $I(g)$ is closed under composition in this sense, if $h\in I(g)$, then $\{h^n:n\in\N\}\subseteq I(g)$, and therefore $I(h)\subseteq I(g)$ since $I(g)$ is closed.

	We will now show $id_X\in I(f)$. As a consequence of the above, the set $\mathcal I(f)=\{I(h):h\in I(f)\}$ is partially ordered by inclusion. Note that, by compactness of $\surj(X)$ and hence of $I(f)$, if $\seq{h_i}_{i\in\omega}$ is a sequence in $I(f)$ such that $I(h_{i+1})\subseteq I(h_i)$ for all $i\in\omega$, then $\bigcap I(h_i)$ is nonempty. In particular, there exists $g\in \bigcap I(h_i)$ and $I(g)$ is a subset of $I(h_i)$ for each $i\in\omega$. By Zorn's lemma, since every descending chain has a lower bound, there exists a minimal element of $\mathcal I(f)$.
	
	Fix $g\in I(f)$ such that $I(g)$ is minimal in $\mathcal I(f)$. Observe that $I(g^2)\subseteq I(g)$ and $I(g^2)\in\mathcal I(f)$, so $I(g^2)=I(g)$ and, in particular, $g\in I(g^2)$, and therefore $g\in\overline{\{g^{1+n}:n\in\N\}}$. It follows that there exists  a sequence $\seq{n_i}_{i\in\omega}$ in $\N$ with $g^{1+n_i}$ converging to $g$ (again, this may be a constant sequence). 
	
	Now, since $\surj(X)$ is compact, there exists an increasing sequence $\seq{i_j}_{j\in\omega}$ such that $\seq{g^{n_{i_j}}}_{j\in\omega}$ converges to a function $h$. But this function must satisfy $h\circ g=g$, since the sequence $\seq{g^{n_{i_+j}}\circ g}=\seq{g^{1+n_{i_j}}}$ converges to $g$. Since $g$ is a surjection, it follows that $h=id_X$. This establishes that $id_X\in I(f)$ and therefore $f$ is uniformly rigid.
\end{proof}

Since $\surj(H_\infty)$ is a Cantor set, it is compact and thus $H_\infty$ is rigid (and infinite). Notice that $\cont(H_\infty)\setminus\surj(\infty)$ contains only the constant maps. Since the collection of constant maps is homeomorphic to $H_\infty$, it is compact, and hence closed. Thus, we have that $\surj(H_\infty)$ is open, which yields the following corollary.

\begin{corollary}
	Shadowing is not dense in $\cont(H_\infty)$ and is not present in $\surj(H_\infty)$.
\end{corollary}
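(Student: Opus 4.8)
The plan is to assemble the corollary from the two results immediately preceding it together with the topological observation recorded just above its statement; the argument is essentially bookkeeping, since all the substantive work has already been done.

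First I would dispatch the claim that shadowing is not present in $\surj(H_\infty)$. The preceding paragraph notes that $\surj(H_\infty)$ is a Cantor set, hence compact, so Theorem \ref{rigidcompact} shows $H_\infty$ is rigid; moreover $H_\infty$ is a non-degenerate continuum and in particular infinite. Theorem \ref{rigidshadowing} then applies verbatim to every $f\in\surj(H_\infty)$, yielding that no such system $(H_\infty,f)$ has shadowing. (Equivalently, one could observe that rigidity of the space makes each surjection uniformly rigid while $H_\infty$, being a continuum, is not totally disconnected, and invoke Theorem \ref{rigidsystem} directly.)

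For the density claim I would use that $\surj(H_\infty)$ is open in $\cont(H_\infty)$, which the preceding paragraph deduces from the fact that $\cont(H_\infty)\setminus\surj(H_\infty)$ consists only of constant maps—a set homeomorphic to $H_\infty$ and therefore compact and closed. Thus $\surj(H_\infty)$ is a nonempty open subset of $\cont(H_\infty)$ containing no map with shadowing, so the collection of maps with shadowing omits an entire nonempty open set and cannot be dense.

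I do not anticipate a genuine obstacle: the corollary is a formal consequence of Theorem \ref{rigidshadowing} and the openness of $\surj(H_\infty)$, both already in hand. The only point meriting a line of care is ensuring that the maps failing shadowing contain a nonempty \emph{open} set rather than merely a dense-complement one, and this is precisely what the openness of $\surj(H_\infty)$—itself a consequence of the constant-map structure of $\cont(H_\infty)\setminus\surj(H_\infty)$—supplies.
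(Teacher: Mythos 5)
Your proposal is correct and follows exactly the paper's own argument: the paper proves this corollary in the paragraph preceding its statement, deducing rigidity of $H_\infty$ from compactness of $\surj(H_\infty)$ via Theorem \ref{rigidcompact}, applying Theorem \ref{rigidshadowing} to rule out shadowing for surjections, and using the fact that the complement of $\surj(H_\infty)$ consists only of constant maps (hence is compact and closed) to conclude that $\surj(H_\infty)$ is a nonempty open set witnessing non-density.
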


\section{Rigidity and Shadowing for Non-Surjections} \label{non-surjective}

As an interesting consequence of Theorem \ref{rigidcompact}, if $X$ is a space with $\cont(X)$ compact, then $X$ is rigid (since $\surj(X)$ is closed in $\cont(X)$, and therefore compact) and thus shadowing is not present in $\surj(X)$. A priori, however, $\surj(X)$ may be nowhere dense in $\surj(X)$ and therefore the question of whether shadowing is dense in $\cont(X)$ is still open. In order to better understand what can happen when $\cont(X)$ is compact, we first generalize the notion of uniform rigidity.

\begin{definition}
	A dynamical system $(X,f)$ is \emph{uniformly retract-rigid} provided that there exists a retraction $r:X\to R$ and a subsequence of $\seq{f^i}$ which converges uniformly to $r$.
\end{definition}

It is not difficult to show that a map $r$ is a retraction if and only if $r$ is \emph{idempotent}, i.e., $r\circ r=r$, and thus uniformly retract-rigid systems can be alternatively characterized as those systems which have a subsequence of iterates converging to an idempotent system. It is also worth pointing out that it must be the case that the set $R$ is $f$-invariant, in the sense that $f(R)=R$ and that it must be the case that $\bigcap f^n(X)= R$. Indeed, fix $y\in R$ and $n\in\omega$. Notice that the sequence $\seq{f^{n_i}(y)}$ converges to $y$, and since for each $n$, $f^n(X)$ is a compact subset of $X$ which contains a tail of $\seq{f^{n_i}(y)}$, we have $y\in f^n(X)$, establishing that $R\subseteq\bigcap f^n(X)$. Conversely, fix $\eta>0$ and choose $N\in\omega$ such that for $i\geq N$, and $x\in X$ we have $f^{n_i}(x)\in B_\epsilon(r(x))$, and by compactness, $f^{n_i}(X)\subseteq B_\epsilon(R)$. In particular, $\bigcap f^n(X)\subseteq f^{n_i}(X)\subseteq B_\epsilon(R)$, and since this holds for every $\epsilon>0$, we have $\bigcap f^n(X)\subseteq R$ by compactness.

The remainder of this paper focuses on developing results for uniform retract-rigidity which parallel those we have developed for uniform rigidity.

\begin{theorem} \label{retractrigid}
	If the uniformly retract-rigid system $(X,f)$ has shadowing, then $\bigcap f^n(X)$ is totally disconnected.
\end{theorem}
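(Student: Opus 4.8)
The plan is to reduce the statement to Theorem \ref{rigidsystem} by passing to the invariant subsystem carried by $R=\bigcap f^n(X)$. Writing $r\colon X\to R$ for the retraction and $\seq{f^{n_i}}$ for a subsequence converging uniformly to $r$, I would show that $(R,f|_R)$ is a uniformly rigid dynamical system which inherits shadowing from $(X,f)$; since the discussion preceding the statement identifies $R$ with $\bigcap f^n(X)$, Theorem \ref{rigidsystem} applied to $(R,f|_R)$ then yields that $\bigcap f^n(X)$ is totally disconnected.

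Before that I would assemble three preliminary facts. First, as already noted, $R$ is compact (a retract of $X$, hence closed) and $f(R)=R$, so $(R,f|_R)$ is a genuine dynamical system whose orbits remain in $R$. Second, $r$ commutes with $f$: from $f\circ f^{n_i}=f^{n_i}\circ f$ one passes to the limit, using uniform continuity of $f$ to handle post-composition and the trivial stability of uniform convergence under pre-composition, to obtain $f\circ r=r\circ f$, and hence $f^i\circ r=r\circ f^i$ for every $i$. Third, since $r$ is a retraction onto $R$ we have $r|_R=id_R$, so restricting $f^{n_i}\to r$ to $R$ gives $f^{n_i}|_R\to id_R$ uniformly; thus $(R,f|_R)$ is uniformly rigid.

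The crux — and what I expect to be the main obstacle — is showing that $(R,f|_R)$ has shadowing, since shadowing does not in general descend to invariant subsystems. Here the retraction does the work. Given $\epsilon>0$, I would first use uniform continuity of $r$ to pick $\epsilon'>0$ with $d(a,b)<\epsilon'\Rightarrow d(r(a),r(b))<\epsilon$, then invoke shadowing of $(X,f)$ to obtain $\delta>0$ tracing every $\delta$-pseudo-orbit to within $\epsilon'$. A $\delta$-pseudo-orbit $\seq{x_i}$ lying in $R$ is in particular a $\delta$-pseudo-orbit in $X$, so there is $z\in X$ with $d(f^i(z),x_i)<\epsilon'$ for all $i$. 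Applying $r$ and using both the commutation $r(f^i(z))=f^i(r(z))$ and the fixing $r(x_i)=x_i$ (valid since $x_i\in R$) gives $d(f^i(r(z)),x_i)<\epsilon$ with $r(z)\in R$; that is, the $f|_R$-orbit of the point $r(z)\in R$ $\epsilon$-shadows $\seq{x_i}$.

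With $(R,f|_R)$ shown to be both uniformly rigid and to have shadowing, Theorem \ref{rigidsystem} gives that $R=\bigcap f^n(X)$ is totally disconnected, completing the argument. The delicate points to verify carefully are that the limiting argument for $f\circ r=r\circ f$ is genuinely uniform (which it is, by uniform continuity of $f$ on the compact space $X$) and that the shadowing point $r(z)$ lands in $R$ with its entire orbit staying in $R$, which follows from $f(R)=R$.
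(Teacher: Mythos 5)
Your proposal is correct, and it takes a genuinely different route from the paper. The paper essentially re-runs the proof of Theorem \ref{rigidsystem} inside $X$: it builds a $\delta$-pseudo-orbit from a chain in a component of $R$, uses $f(R)=R$ to extend that pseudo-orbit \emph{backward} by $n_k$ steps within $R$, shadows each extension, pushes the shadowing points forward by $f^{n_k}$, and extracts a limit point which is forced to lie in $R$; it then concludes with the same distance estimate as in Theorem \ref{rigidsystem}. You instead prove once and for all that the subsystem $(R,f|_R)$ is uniformly rigid and inherits shadowing, and then cite Theorem \ref{rigidsystem} as a black box. The key steps all check out: $f\circ r=r\circ f$ follows from taking uniform limits on both sides of $f\circ f^{n_i}=f^{n_i}\circ f$ (uniform continuity of $f$ handles post-composition, and pre-composition is immediate since $\rho(f^{n_i}\circ f, r\circ f)\leq\rho(f^{n_i},r)$); restricting $f^{n_i}\to r$ to $R$ gives uniform rigidity of $(R,f|_R)$; and projecting a shadowing point $z$ by $r$, using $r(f^i(z))=f^i(r(z))$ and $r(x_i)=x_i$, shows that $r(z)\in R$ shadows any pseudo-orbit lying in $R$. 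Your argument is more modular and isolates a reusable lemma (shadowing descends to the retract when the retraction is a uniform limit of iterates), at the cost of needing the commutation identity; the paper's argument stays entirely at the level of pseudo-orbits and instead pays with the backward-extension and limit-of-shadowing-points construction to force the tracing point into $R$. Both are complete proofs.
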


\begin{proof}
	Since $(X,f)$ is uniformly retract-rigid, we can find a compact set $R\subseteq X$ and a retraction $r:X\to R$ and and a subsequence $\seq{n_i}$ such that $\seq{f^{n_i}}$ converges to $r$. 
	
	Now, suppose that $(X,f)$ has shadowing. Fix $\epsilon>0$ and choose $\delta>0$ such that every $\delta$-pseudo-orbit is $\epsilon/4$-shadowed. Now, let $p\in R$ and let $C$ be the component of $p$ in $R$.
	
	Fix $q\in C$. We will show that $q=p$ in  a manner similar to that of Theorem \ref{rigidsystem}. Towards this end, choose $N$ such that $\rho(f^N,r)<\delta/2$.
	
	Now, since $p$ and $q$ belong to the same component of $X$, we can find $K\geq 0$ and a sequence $\seq{a_i}_{i\leq K}$ in $X$ such that $a_0=p$, $a_K=q$ and for all $i<K$, $d(a_i,a_{i+1})<\delta/2$ and we extend this to an infinite sequence $\seq{a_i}_{i\in\omega}$ by defining $a_i=q$ for all $i>K$. From this sequence, we construct the sequence $\seq{c_i}_{i\in\omega}$ by defining, $c_i=f^{i-jN}(a_j)$ where $jN\leq i<(j+1)N$. As in Theorem \ref{rigidsystem}, this is a $\delta$-pseudo-orbit for $f$.
	
	Moreover, since $f(R)=R$, we can, for each $k\in\omega$, find $c_{-n_k}\in R$ with $f^{n_k}(c_{-n_k})=c_0$. By defining $c_{j-n_k}=f^j(c_{-n_k})$ for $j\leq n_k$, we can construct a family of $\delta$-pseudo-orbits $\seq{c_i}_{i\geq-n_k}$ in $R$.
	
	By choice of $\delta$, the pseudo-orbit $\seq{c_i}_{i\geq-n_k}$ is $\epsilon/4$-shadowed by some point $z_k\in X$, and therefore the pseudo-orbit $\seq{c_i}_{i\in\omega}$ is $\epsilon/4$-shadowed by $f^{n_k}(z_k)$. By passing to a subsequence if necessary, we can assume that the points $f^{n_k}(z_k)$ converge to some point $z\in X$. This point is easily seen to $\epsilon/3$-shadow  $\seq{c_i}_{i\in\omega}$, since $d(f^i(f^{n_k}(z_k)),c_i)$ converges to $d(f^i(z),c_i)$. Furthermore, this point $z$ belongs to $R$ since $R$ is closed and  $d(z,R)$ is the limit of $d(f^{n_k}(z_k),R)$, which is clearly zero.
	
	To complete the proof, choose $t\geq K$ such that $\rho(f^t,r)<\epsilon/(3N)$. It is easy to verify then, that $\rho(f^{tN},r)<\epsilon/3$ and thus 
	\[d(p,q)\leq d(p,z)+d(z,f^{tN}(z))+d(f^{tN}(z),q)<\epsilon.\]
	As in Theorem \ref{rigidsystem}, it quickly follows that $C=\{p\}$, i.e., $R$ is totally disconnected.
\end{proof}

Note that, in contrast with the result of Theorem \ref{rigidcompact}, this is not a complete characterization. However, it possible that more can be said. In particular, if $R=\bigcap f^n(X)$ is totally disconnected, then $(R,f|R)$ has shadowing since the restriction of $f$ to $R$ is a rigid system. However, the action of $f$ on the complement of $R$ may not be sufficiently well structured to guarantee shadowing of $(X,f)$. Indeed, there are examples of uniformly retract-rigid systems in which $(X,f)$ does not have shadowing. This can be observed by letting $(Y,g)$ be a system without shadowing and letting $(X,f)$ be the `cone' of $(Y,g)$ with the system $([0,1],t\mapsto \frac{1}{2}t)$, i.e. the space $Y$ is the quotient of $Y\times[0,1]$ obtained by identifying all points of the form $(y,0)$ and $f((y,t))$ is defined to be $(g(y),\frac{1}{2}t)$. Still, this system exhibits the property of \emph{eventual shadowing} and it seems probable that every uniformly retract-rigid system with $R$ totally disconnected exhibits this property---see \cite{GoodMeddaugh-ICT} for a detailed discussion of this concept. 

As is the case with uniform rigidity, it is useful to consider the class of spaces on which every dynamical system is uniformly retract-rigid.

\begin{definition}
	A compact metric space $X$ is \emph{retract-rigid} provided that for every $f\in\cont(X)$, the system $(X,f)$ is uniformly retract-rigid.
\end{definition}

Note that a retract-rigid space is necessarily rigid, but a rigid space need not be retract-rigid.

We will now show that in systems on retract-rigid spaces, shadowing is an uncommon property in $\cont(X)$. We remark that since each constant map in $\cont(X)$ has shadowing, shadowing can never be absent in $\cont(X)$ like it can be in $\surj(X)$. In order to proceed, we first need the following lemmas.

\begin{lemma} \label{retractions}
	If $X$ is retract-rigid and $r:X\to R$ is a retraction, then $R$ is rigid.
\end{lemma}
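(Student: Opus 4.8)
The plan is to reduce the rigidity of $R$ to the retract-rigidity of the ambient space $X$ by extending an arbitrary surjection of $R$ to a self-map of $X$ through the retraction $r$. So fix $g\in\surj(R)$; the goal is to produce a subsequence of $\seq{g^i}$ converging uniformly to $id_R$. I would define $f=g\circ r$, which lies in $\cont(X)$ since $r$ and $g$ are continuous and $r(X)=R$ is the domain of $g$.

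First I would establish the iterate formula $f^n=g^n\circ r$ for all $n\geq 1$ by induction. The base case is the definition of $f$, and in the inductive step, for $x\in X$ one has $f^{n+1}(x)=g\bigl(r(g^n(r(x)))\bigr)$; since $g$ carries $R$ into itself, $g^n(r(x))\in R$, and $r|_R=id_R$ collapses the inner retraction, giving $g^{n+1}(r(x))$. Restricting to $R$, where $r$ is the identity, this yields $f^n|_R=g^n$ for every $n\geq 1$. I would also record that, because $g$ is surjective, $f(X)=g(r(X))=g(R)=R$, and inductively $f^n(X)=R$ for every $n\geq 1$, so that $\bigcap_n f^n(X)=R$.

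Next I would invoke retract-rigidity of $X$ applied to the system $(X,f)$: there is a retraction $s\colon X\to S$ and a subsequence $\seq{f^{n_i}}$ converging uniformly to $s$, and after discarding at most one term I may assume every $n_i\geq 1$. The crucial identification is that $S=R$. By the observation recorded immediately after the definition of uniform retract-rigidity, the retract of a uniformly retract-rigid system must equal $\bigcap_n f^n(X)$; for our system this retract is $S$, while the computation above gives $\bigcap_n f^n(X)=R$. Hence $S=R$, so $s$ is a retraction of $X$ \emph{onto} $R$ and therefore fixes $R$ pointwise, i.e. $s|_R=id_R$.

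Finally, restricting the uniform convergence $f^{n_i}\to s$ to the invariant set $R$ gives $g^{n_i}=f^{n_i}|_R\to s|_R=id_R$ uniformly, which is precisely uniform rigidity of $(R,g)$. As $g\in\surj(R)$ was arbitrary, this shows $R$ is rigid. The single step I would treat with the most care is the identification $S=R$: everything else is bookkeeping with the iterate formula. If one prefers not to cite the $\bigcap_n f^n(X)$ remark verbatim, I would prove $S=R$ directly, noting $S=s(X)\subseteq R$ because each $f^{n_i}$ has image in the closed set $R$, and $R=\bigcap_n f^n(X)\subseteq S$ from the same remark.
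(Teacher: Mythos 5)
Your proposal is correct and follows essentially the same route as the paper: extend $g\in\surj(R)$ to $f=g\circ r\in\cont(X)$, apply retract-rigidity of $X$ to get a subsequence of iterates converging to a retraction, identify its image with $R$ (using that each $f^{n}(X)=R$), and restrict to $R$ to obtain uniform convergence of $g^{n_i}$ to $id_R$. Your treatment of the identification $S=R$ is somewhat more careful than the paper's, which asserts it directly from $f^{n_i}(X)=R$, but the argument is the same in substance.
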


\begin{proof}
	Suppose that $X$ is retract-rigid and let $r:X\to R$ be a retraction. 
	
	Suppose that $f\in\surj(R)$. Then $g=f\circ r\in\cont(X)$ and therefore $(X,g)$ is retract-rigid. Fix $\seq{n_i}$ such that $\seq{g^{n_i}}$ converges to a retraction $r'$. Since $g^{n_i}(X)=f^{n_i}(r(X))=f^{n_i}(R)=R$ for all $i\in\omega$, it follows that $r'(X)=R$.
	
	But then, $\seq{f^{n_i}}=\seq{g^{n_i}|_{R}}$ converges to $r'|_R=id_R$, and therefore $(R,f)$ is uniformly rigid. Since this holds for all $f\in\surj(R)$, $R$ is a rigid space.
\end{proof}

\begin{lemma} \label{finitelymanycomponents}
	If $X$ is retract-rigid, then $X$ has finitely many components.
\end{lemma}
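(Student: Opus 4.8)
The plan is to prove the contrapositive: assuming $X$ has infinitely many components, I will construct a retraction $r\colon X\to R$ onto an infinite, totally disconnected subset $R$, and then derive a contradiction directly from Lemmas \ref{retractions} and \ref{totallydisconnectedrigid}.

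First I would pass to the space of components. Since $X$ is compact metric, its decomposition into connected components is upper semicontinuous and the associated quotient space $Y$ is compact, metric, and totally disconnected; the quotient map $\pi\colon X\to Y$ is continuous, and because every clopen subset of $X$ is a union of components, the clopen subsets of $X$ are exactly the $\pi$-preimages of clopen subsets of $Y$. Having infinitely many components makes $Y$ infinite, so $Y$ has an accumulation point $y_\infty$ together with a sequence of distinct points $y_n\to y_\infty$. Using that $Y$ is zero-dimensional, I would select pairwise disjoint clopen sets $V_n\ni y_n$ with $y_\infty\notin V_n$, and set $U_n=\pi^{-1}(V_n)$; these are pairwise disjoint clopen subsets of $X$.

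Next I would choose a point $x_n\in U_n$ for each $n$ and, after passing to a subsequence, arrange that $x_n\to x_\infty$ for some $x_\infty\in X$. Since the $U_n$ are closed and pairwise disjoint, no tail of $\seq{x_n}$ can lie in a fixed $U_m$, which forces $x_\infty\notin\bigcup_n U_n$; discarding finitely many terms we may also assume the $x_n$ are distinct and different from $x_\infty$. Define $r$ to send every point of $U_n$ to $x_n$ and every point of $X\setminus\bigcup_n U_n$ to $x_\infty$. Then $r$ fixes each $x_n$ and $x_\infty$, so it is a retraction onto $R=\{x_n:n\in\N\}\cup\{x_\infty\}$, which is an infinite convergent sequence together with its limit, hence infinite and totally disconnected. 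The one real verification is continuity: on each clopen $U_m$ the map is locally constant, and at any point $w\notin\bigcup_n U_n$ any sequence $w_k\to w$ either eventually avoids the $U_n$ or satisfies $w_k\in U_{n_k}$ with $n_k\to\infty$ (otherwise $w$ would lie in some closed $U_m$), so in both cases $r(w_k)\to x_\infty=r(w)$.

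Finally, Lemma \ref{retractions} applies to the retraction $r$ and gives that $R$ is rigid; but $R$ is infinite and totally disconnected, contradicting Lemma \ref{totallydisconnectedrigid}. Hence $X$ cannot have infinitely many components. I expect the main obstacle to be the continuity check for $r$ — in particular the assertion that the targets $x_n$ converge to $x_\infty$ along any approach to a point outside $\bigcup_n U_n$ — which rests entirely on the $U_n$ being clopen and pairwise disjoint, and crucially not on any control of their diameters (the components, and hence the $U_n$, may be large).
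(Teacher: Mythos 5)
Your proof is correct and follows essentially the same strategy as the paper: construct a retraction of $X$ onto an infinite, totally disconnected set (a convergent sequence of points chosen from pairwise disjoint nonempty clopen sets, together with its limit) and contradict Lemmas \ref{retractions} and \ref{totallydisconnectedrigid}. The only difference is how the disjoint clopen family is produced --- you pull back clopen neighborhoods from the (compact, totally disconnected) component quotient, whereas the paper builds the family by iterated separations $A_n, B_n$ of $X$ --- and your continuity check for $r$, resting on the $U_n$ being clopen and pairwise disjoint, is sound.
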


\begin{proof}
	Suppose that $X$ is retract-rigid and that $X$ has infinitely many components. We will show that there is a retraction of $X$ onto an infinite, totally disconnected space. By Lemma \ref{totallydisconnectedrigid}, such a space cannot be rigid, which by Lemma \ref{retractions} contradicts the assumption that $X$ is retract-rigid, completing the proof.
	
	The claimed retraction is constructed as follows. Since $X$ has infinitely many components, we can find a separation $A_1, B_1$ of $X$, at least one of which contains infinitely many components--without loss of generality, $B_1$. We now proceed inductively: having defined $A_n$ and $B_n$ to be a separation of $B_{n-1}$ with $B_n$ containing infinitely many components, we choose a separation $A_{n+1}$, $B_{n+1}$ of $B_n$ such that $B_{n+1}$ contains infinitely many components. Finally, we define $A_\infty=\bigcap B_{n}$.
	
	Note that the collection $\{A_n:n\in\omega\cup\{\infty\}\}$ is a a pairwise disjoint collection of nonempty closed subsets of $X$, and that each element other than $A_\infty$ is also open. Now, choose a sequence $\seq{a_n}_{n\in\omega}$ with $a_n\in A_n$ for each $n\in\omega$. Since $X$ is compact, we may choose a subsequence $\seq{n_i}_{i\in\omega}$ which converges to a point $a_\infty$, which is necessarily in $A_\infty$. It is easy to see that $\{a_{n_i}:i\in\omega\}\cup\{a_\infty\}$ is infinite, compact and totally disconnected.
	
	Finally, we define $r:X\to\{a_{n_i}:i\in\omega\}\cup\{a_\infty\}$ as follows. For $x\in A_\infty$, $r(x)=a_\infty$. For $x\in A_k$, choose $i\in\omega$ minimal such that $x\leq n_i$ and define $r(x)=a_{n_i}$. All that remains to verify is that $r$ is continuous. To see this, we observe that the subspace topology on $\{a_{n_i}:i\in\omega\}\cup\{a_\infty\}$ has basis consisting of all sets of the form $\{a_{n_i}:i\in\omega\}$ or $\{a_{n_i}:i\geq L\}\cup\{a_\infty\}$. Clearly $r^{-1}(\{a_{n_i}\})=\bigcup_{n_{i-1}<k\leq n_i} A_k$ and $r^{-1}(\{a_{n_i}:i\geq L\}\cup\{a_\infty\})=B_{n_L-1}$ are both open, and thus $r$ is continuous.

\end{proof}


\begin{theorem} \label{infiniteequalsnoshadowing}
	Let $X$ be a retract-rigid space and $f\in\cont(X)$ such that $\bigcap f^n(X)$ is infinite. Then $(X,f)$ does not have shadowing.
\end{theorem}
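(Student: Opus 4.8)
The plan is to argue by contradiction, assembling the three preceding results into a short chain. Suppose that $(X,f)$ does have shadowing. Since $X$ is retract-rigid, the system $(X,f)$ is by definition uniformly retract-rigid, so there is a retraction $r\colon X\to R$ together with a subsequence of $\seq{f^i}$ converging uniformly to $r$; moreover, as established in the discussion following the definition of uniform retract-rigidity, the image $R$ coincides with $\bigcap f^n(X)$, which is infinite by hypothesis.

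Next I would feed the shadowing assumption into Theorem \ref{retractrigid}. Because $(X,f)$ is uniformly retract-rigid and has shadowing, that theorem tells us $\bigcap f^n(X)=R$ is totally disconnected. Separately, since $X$ is retract-rigid and $r\colon X\to R$ is a retraction, Lemma \ref{retractions} gives that $R$ is itself a rigid space. Thus $R$ is simultaneously totally disconnected and rigid.

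At this point Lemma \ref{totallydisconnectedrigid} closes the argument: a totally disconnected space that is rigid must be finite. But $R=\bigcap f^n(X)$ was assumed infinite, a contradiction. Hence $(X,f)$ cannot have shadowing.

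I do not expect a genuine obstacle here, since all of the analytic work lives in the earlier Theorem \ref{retractrigid} and Lemmas \ref{retractions} and \ref{totallydisconnectedrigid}; this statement is essentially their corollary. The only point that needs care is bookkeeping about the set $R$: one must check that the retract $R$ produced by uniform retract-rigidity is exactly $\bigcap f^n(X)$, so that the infiniteness hypothesis and the conclusion of Theorem \ref{retractrigid} refer to the same set, and that the associated $r$ is precisely the retraction to which Lemma \ref{retractions} is applied. Both facts are guaranteed by the remarks immediately following the definition of uniform retract-rigidity, so the contradiction goes through cleanly.
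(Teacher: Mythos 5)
Your proof is correct, but it closes the argument by a genuinely different route from the paper. Both arguments open identically: assume shadowing, note that retract-rigidity of $X$ makes $(X,f)$ uniformly retract-rigid, and invoke Theorem \ref{retractrigid} to conclude that $\bigcap f^n(X)$ is totally disconnected. From there the paper appeals to Lemma \ref{finitelymanycomponents}: $X$ has finitely many components, so each $f^n(X)$, being a continuous image of $X$, has at most that many components, whence $\bigcap f^n(X)$ is a totally disconnected set with finitely many components and is therefore finite. You instead apply Lemma \ref{retractions} directly to the retraction $r\colon X\to R$ supplied by uniform retract-rigidity (using the remark identifying $R$ with $\bigcap f^n(X)$) to conclude that $R$ is a rigid space, and then Lemma \ref{totallydisconnectedrigid} to conclude that a totally disconnected rigid space is finite. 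Your route is arguably more economical: the paper's Lemma \ref{finitelymanycomponents} is itself proved by constructing an auxiliary retraction onto an infinite totally disconnected set and then invoking Lemmas \ref{retractions} and \ref{totallydisconnectedrigid}, so you reach the same two underlying lemmas while bypassing the component-counting step and the construction of that auxiliary retraction---the retraction you use is the one already in hand. What the paper's route buys is the standalone fact that retract-rigid spaces have finitely many components, which is of independent interest. The one bookkeeping point you flag---that the retract $R$ in the definition of uniform retract-rigidity is exactly $\bigcap f^n(X)$, so that the infiniteness hypothesis, the conclusion of Theorem \ref{retractrigid}, and the input to Lemma \ref{retractions} all concern the same set---is indeed settled by the remarks following that definition, so there is no gap.
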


\begin{proof}
	Suppose $X$ is retract-rigid and that $(X,f)$ has shadowing, but that $\bigcap f^n(X)$ is infinite. Since $X$ is retract-rigid, $(X,f)$ is uniformly retract-rigid and therefore $\bigcap f^n(X)$ is totally disconnected by Theorem \ref{retractrigid}. 
	
	However, by Lemma \ref{finitelymanycomponents}, $X$ has finitely many components. Let $N$ be the number of components. Then, for each $n\in\omega$, $f^n(X)$ has no more than $N$ components since it is the continuous image of $X$. It follows then, that $\bigcap f^n(X)$ has at most $N$ many components.
	
	Thus $\bigcap f^n(X)$ is totally disconnected with finitely many components, and is therefore finite, contradicting our assumption.
\end{proof}

Of course, these results are only useful if we can determine that a space is retract-rigid. Fortunately, the results concerning compactness of $\surj(X)$ and rigidity of $X$ in Theorem \ref{rigidcompact} generalize in the obvious way.

\begin{theorem}
	Let $X$ be a compact metric space. If $\cont(X)$ is compact, then $X$ is retract-rigid.
\end{theorem}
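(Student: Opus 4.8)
The plan is to mirror the proof of Theorem~\ref{rigidcompact}, replacing its final use of surjectivity with a direct argument that manufactures an idempotent limit. Fix $f\in\cont(X)$; I must show that $(X,f)$ is uniformly retract-rigid, i.e.\ that some subsequence of $\seq{f^i}$ converges uniformly to an idempotent map (equivalently, a retraction onto its image). Since $\cont(X)$ is closed under composition and, by hypothesis, compact, every construction below stays inside a compact space. The point I will lean on repeatedly is that composition is \emph{separately} continuous on $(\cont(X),\rho)$: right composition $\phi\mapsto\phi\circ h$ is $1$-Lipschitz in $\phi$, while left composition $\phi\mapsto h\circ\phi$ is continuous because the fixed map $h$ is uniformly continuous on the compact space $X$.

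Following Theorem~\ref{rigidcompact} almost verbatim, for $g\in I(f):=\overline{\{f^n:n\in\N\}}$ I set $I(g)=\overline{\{g^n:n\in\N\}}$, a compact subset of $\cont(X)$. The estimate in that proof shows $I(g)$ is closed under composition --- crucially, that step never used surjectivity --- and hence $I(h)\subseteq I(g)$ whenever $h\in I(g)$. Using compactness to guarantee that nested descending chains have nonempty intersection, Zorn's lemma then yields $g\in I(f)$ with $I(g)$ minimal in $\mathcal I(f)=\{I(h):h\in I(f)\}$. Exactly as before, minimality forces $I(g^2)=I(g)$, so $g\in I(g^2)$ and there is a sequence $\seq{n_i}$ in $\N$ with $g^{1+n_i}\to g$; passing to a subsequence I may assume $g^{n_{i_j}}\to h$ for some $h\in I(g)$. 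Right-composition continuity then gives $h\circ g=\lim_j g^{n_{i_j}}\circ g=\lim_j g^{1+n_{i_j}}=g$.

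Here the two proofs diverge: $g$ need not be surjective, so I cannot cancel $g$ on the right to conclude $h=id_X$. Instead I will show that $h$ itself is the desired retraction. From $h\circ g=g$ and associativity I obtain $h\circ g^m=g^m$ for every $m\ge 1$; applying this with $m=n_{i_j}$ (legitimate since $n_{i_j}\ge 1$) gives $h\circ g^{n_{i_j}}=g^{n_{i_j}}$. Letting $j\to\infty$, the right-hand side converges to $h$, while the left-hand side converges to $h\circ h$ by continuity of left composition (using uniform continuity of the fixed map $h$). Hence $h\circ h=h$, so $h$ is idempotent and therefore a retraction onto $R=h(X)$. Since $g\in I(f)$ and $I(f)$ is closed under composition and closed, $I(g)\subseteq I(f)=\overline{\{f^n:n\in\N\}}$, so $h\in\overline{\{f^n:n\in\N\}}$ and some subsequence of $\seq{f^i}$ converges uniformly to $h$ (in the degenerate case $h=f^n$, idempotency forces $f^{2n}=f^n$, and the increasing-index sequence $n,2n,3n,\ldots$ is constantly $h$). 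Thus $(X,f)$ is uniformly retract-rigid, and since $f$ was arbitrary, $X$ is retract-rigid.

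I expect the endgame to be the main obstacle, precisely because surjectivity --- the tool that finished Theorem~\ref{rigidcompact} --- is unavailable. The crucial new ingredient is the idempotency computation of the third paragraph, whose validity rests on the (easy to overlook) fact that left composition is continuous because every continuous self-map of the compact space $X$ is uniformly continuous. I would therefore take care to record the separate continuity of composition explicitly before interchanging any limits, since all of the cancellations $h\circ g^m=g^m$ and the limit identifications $h\circ g=g$ and $h\circ h=h$ depend on it.
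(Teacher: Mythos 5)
Your proof is correct and follows essentially the same route as the paper: both reuse the argument of Theorem \ref{rigidcompact} verbatim up to the identity $h\circ g=g$ and then conclude that $h$ is a retraction. The only (harmless) difference lies in that final step --- the paper observes directly that $h(X)=g(X)$ and $h|_{g(X)}=id_{g(X)}$, while you derive idempotency by passing to the limit in $h\circ g^{n_{i_j}}=g^{n_{i_j}}$; both are valid.
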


\begin{proof}
	Suppose that $X$ is a compact metric space with $\cont(X)$ compact and fix $f\in\cont(X)$.
	The proof of this result proceeds exactly in the proof of Theorem \ref{rigidcompact} until we reach that $h\circ g = g$. Since $g$ need not be a surjection, we do not know that $h=id_X$. However, since $h\circ g=g$, $h(X)=g(X)$ and $h|_{g(X)}=id_{g(X)}$, so that $h$ is a retract. Since $h\in I(f)$, there is a subsequence of $\seq{f^{n}}$  which converges to a retraction, and thus $f$ is uniformly retract-rigid.
\end{proof}

We close with the following result, which characterizes shadowing in compact connected spaces $X$ with compact $\cont(X)$ (of which the spaces $H_n$ for $n\in\N\cup\{\infty\}$ are trivial examples).

\begin{corollary} \label{compactC}
	Let $X$ be a compact, connected metric space with $\cont(X)$ compact. Then $(X,f)$ has shadowing if and only if $\seq{f^n}$ converges to a constant map.
\end{corollary}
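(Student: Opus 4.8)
The plan is to prove the two implications separately, using the structural results of this section for the forward direction and a direct pseudo-orbit estimate for the converse. Throughout write $R=\bigcap f^n(X)$, and recall from the discussion preceding Theorem \ref{retractrigid} that for a uniformly retract-rigid system this intersection is exactly the image of the limiting retraction.

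For the forward direction, assume $(X,f)$ has shadowing. Since $\cont(X)$ is compact, the theorem that compactness of $\cont(X)$ forces $X$ to be retract-rigid applies, so $(X,f)$ is uniformly retract-rigid, and Theorem \ref{retractrigid} then shows that $R$ is totally disconnected. The new ingredient I would add is connectedness: because $X$ is connected and $f$ continuous, each $f^n(X)$ is a subcontinuum, and since $f(X)\subseteq X$ these images are nested, $f^{n+1}(X)\subseteq f^n(X)$. As a nested intersection of continua in a compact metric space is again a nonempty continuum, $R$ is connected; being also totally disconnected, it must be a single point $\{c\}$. I would then upgrade the set-theoretic convergence $f^n(X)\downarrow\{c\}$ to uniform convergence of the maps: for each $\epsilon>0$ the compact sets $f^n(X)\setminus B_\epsilon(c)$ are nested with empty intersection, so by compactness some $f^N(X)\subseteq B_\epsilon(c)$, whence $f^n(X)\subseteq B_\epsilon(c)$ for all $n\ge N$, i.e. $\sup_{x\in X} d(f^n(x),c)\le\epsilon$. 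Thus $\seq{f^n}$ converges to the constant map at $c$.

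For the converse, assume $\seq{f^n}$ converges uniformly to the constant map at $c$ (so $c$ is fixed). This direction needs neither connectedness nor compactness of $\cont(X)$. Given $\epsilon>0$, first choose $M$ with $d(f^n(x),c)<\epsilon/3$ for all $n\ge M$ and all $x$, and then, by uniform continuity of the finitely many maps $f^0,\dots,f^M$, choose $\delta>0$ so that $d(a,b)<\delta$ implies $d(f^k(a),f^k(b))<\epsilon/(3M)$ for all $k\le M$. I claim the orbit of $x_0$ $\epsilon$-shadows every $\delta$-pseudo-orbit $\seq{x_i}$. For $i\le M$, telescoping along $f^i(x_0),f^{i-1}(x_1),\dots,x_i$, whose consecutive terms are $f^k$-images ($k\le M$) of points within $\delta$, bounds $d(f^i(x_0),x_i)$ by $M\cdot\epsilon/(3M)=\epsilon/3$. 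For $i>M$ I instead telescope backwards along $x_i,f(x_{i-1}),\dots,f^M(x_{i-M})$ to get $d(x_i,f^M(x_{i-M}))<\epsilon/3$; combining $d(f^M(x_{i-M}),c)<\epsilon/3$ with $d(f^i(x_0),c)<\epsilon/3$ yields $d(f^i(x_0),x_i)<\epsilon$ by the triangle inequality.

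The only genuinely non-routine point is the topological step in the forward direction: combining total disconnectedness of $R$ (from Theorem \ref{retractrigid}) with the fact that a nested intersection of continua is a continuum to force $R=\{c\}$, and then promoting $f^n(X)\downarrow\{c\}$ to uniform convergence of the iterates. The converse is a standard two-regime telescoping estimate; the only care required is in selecting a single $\delta$ that works uniformly for all iterates up to $M$ and in treating the early and late indices by forward and backward telescoping, respectively.
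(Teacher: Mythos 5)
Your proof is correct, and the converse direction is essentially identical to the paper's (the same two-regime telescoping estimate with the same choice of constants). The forward direction, however, takes a somewhat more direct route. The paper invokes Theorem \ref{infiniteequalsnoshadowing}, which concludes that $\bigcap f^n(X)$ is \emph{finite} by combining total disconnectedness (Theorem \ref{retractrigid}) with Lemma \ref{finitelymanycomponents} (a retract-rigid space has finitely many components), and only then uses connectedness of $X$ to pass from finite to singleton. You bypass the finitely-many-components machinery entirely: since $X$ is connected, the sets $f^n(X)$ are nested subcontinua, so $R=\bigcap f^n(X)$ is itself a continuum, and a totally disconnected continuum is a point. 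This exploits the connectedness hypothesis earlier and more directly, and it has the merit of making explicit two steps the paper leaves implicit --- why a finite (or totally disconnected) nested intersection of continua must be a singleton, and why $f^n(X)\downarrow\{c\}$ upgrades to uniform convergence of the iterates (your argument via the nested compact sets $f^n(X)\setminus B_\epsilon(c)$ having empty intersection is exactly the right justification for the paper's ``by compactness of $X$'' remark). The paper's route through Theorem \ref{infiniteequalsnoshadowing} buys nothing extra in the connected setting; its value lies in the disconnected case, which is not at issue here.
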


\begin{proof}
	Suppose $X$ is a compact, connected metric space with $\cont(X)$ compact. By the previous result, $X$ is retract-rigid, and thus $(X,f)$ is uniformly retract-rigid.
	
	Then $(X,f)$ has shadowing only if $\bigcap f^n(X)$ is finite, by Theorem \ref{infiniteequalsnoshadowing}. Since $X$ is connected, this is equivalent to $\bigcap f^n(X)$ being a singleton set $\{c\}$ for some $c\in X$. By compactness of $X$, it follows that this is the case if and only if $\seq{f^n}$ converges to the constant map $x\mapsto c$.
	
	Conversely, if $\seq{f^n}$ converges to the constant map $c:X\to X$ given by $x\mapsto c$, we can observe that $(X,f)$ has shadowing as follows. Fix $\epsilon>0$ and choose $N$ such that for $n\geq N$, we have $\rho(f^n,c)<\epsilon/3$. Then, by uniform continuity of $f$, choose $\delta>0$ such that if $d(a,b)<\delta$, then $d(f^i(a),f^i(b))<\frac{\epsilon}{3N}$ for each $i\leq N$.
	
	Now, let $\seq{x_i}$ be a $\delta$-pseudo-orbit. We claim that it is shadowed by the orbit of $x_0$. Indeed, by choice of $\delta$, we have that for all $i\geq0$ and $M\leq N$,
	\begin{align*}d(f^M(x_i), x_{M+i})&\leq\sum_{j=0}^{M-1}d\left(f^{M-j}(x_{i+j}),f^{M-j-1}(x_{i+j+1})\right)\\
	&\leq\sum_{j=0}^{M-1}d\left(f^{M-j-1}(f(x_{i+j})),f^{M-j-1}(x_{i+j+1})\right)\\
	&< M\frac{\epsilon}{2N}=\epsilon/3.
\end{align*}

And thus, by taking $i=0$ and $N=k$, we see that $d(f^k(x_),x_k)<\epsilon/3$ for $k\leq N$. For $k>N$, we let $i=k-N$ and observe
\begin{align*}d(f^k(x_0),x_k)&=d(f^{N+i}(x_0), x_{N+i})\\
	&\leq d(x_{N+i},f^N(x_i))+d(f^N(x_i),c)+d(f^{N+i}(x_0), c)<\epsilon,\end{align*}
thus verifying that the orbit of $x_0$ indeed shadows $\seq{x_i}$.

\end{proof}

\bibliographystyle{plain}
\bibliography{../../ComprehensiveBib}

\def\ocirc#1{\ifmmode\setbox0=\hbox{$#1$}\dimen0=\ht0 \advance\dimen0
  by1pt\rlap{\hbox to\wd0{\hss\raise\dimen0
  \hbox{\hskip.2em$\scriptscriptstyle\circ$}\hss}}#1\else {\accent"17 #1}\fi}
  \def\ocirc#1{\ifmmode\setbox0=\hbox{$#1$}\dimen0=\ht0 \advance\dimen0
  by1pt\rlap{\hbox to\wd0{\hss\raise\dimen0
  \hbox{\hskip.2em$\scriptscriptstyle\circ$}\hss}}#1\else {\accent"17 #1}\fi}
  \def\cprime{$'$}
\begin{thebibliography}{10}

\bibitem{Bowen}
R.~Bowen.
\newblock {$\omega $}-limit sets for axiom {${\rm A}$} diffeomorphisms.
\newblock {\em J. Differential Equations}, 18(2):333--339, 1975.

\bibitem{BMR-Dendrites}
William~R. Brian, Jonathan Meddaugh, and Brian~E. Raines.
\newblock Shadowing is generic on dendrites.
\newblock {\em Discrete Contin. Dyn. Syst.}, To Appear.

\bibitem{Cook}
H.~Cook.
\newblock Continua which admit only the identity mapping onto non-degenerate
  subcontinua.
\newblock {\em Fund. Math.}, 60:241--249, 1967.

\bibitem{Corless}
R.~M. Corless and S.~Y. Pilyugin.
\newblock Approximate and real trajectories for generic dynamical systems.
\newblock {\em J. Math. Anal. Appl.}, 189(2):409--423, 1995.

\bibitem{deVries}
Jan de~Vries.
\newblock {\em Topological dynamical systems}, volume~59 of {\em De Gruyter
  Studies in Mathematics}.
\newblock De Gruyter, Berlin, 2014.
\newblock An introduction to the dynamics of continuous mappings.

\bibitem{DonosoShao:uniformlyrigidmodels}
Sebasti\'{a}n Donoso and Song Shao.
\newblock Uniformly rigid models for rigid actions.
\newblock {\em Studia Math.}, 236(1):13--31, 2017.

\bibitem{GlasnerMaon}
S.~Glasner and D.~Maon.
\newblock Rigidity in topological dynamics.
\newblock {\em Ergodic Theory Dynam. Systems}, 9(2):309--320, 1989.

\bibitem{GoodMeddaugh-ICT}
Chris Good and Jonathan Meddaugh.
\newblock Orbital shadowing, internal chain transitivity and {$\omega$}-limit
  sets.
\newblock {\em Ergodic Theory Dynam. Systems}, 38(1):143--154, 2018.

\bibitem{KatznelsonWeiss}
Yitzhak Katznelson and Benjamin Weiss.
\newblock When all points are recurrent/generic.
\newblock In {\em Ergodic theory and dynamical systems, {I} ({C}ollege {P}ark,
  {M}d., 1979--80)}, volume~10 of {\em Progr. Math.}, pages 195--210.
  Birkh\"{a}user, Boston, Mass., 1981.

\bibitem{Korner}
T.~W. K\"{o}rner.
\newblock Recurrence without uniform recurrence.
\newblock {\em Ergodic Theory Dynam. Systems}, 7(4):559--566, 1987.

\bibitem{Mazur-Oprocha}
Piotr Ko\'scielniak, Marcin Mazur, Piotr Oprocha, and Pawe\l Pilarczyk.
\newblock Shadowing is generic---a continuous map case.
\newblock {\em Discrete Contin. Dyn. Syst.}, 34(9):3591--3609, 2014.

\bibitem{Meddaugh-Genericity}
Jonathan Meddaugh.
\newblock On genericity of shadowing in one dimension.
\newblock {\em Fund. Math.}, 255(1):1--18, 2021.

\bibitem{meddaugh2021shadowing}
Jonathan Meddaugh.
\newblock Shadowing as a structural property of the space of dynamical systems,
  2021.

\bibitem{MR}
Jonathan Meddaugh and Brian~E. Raines.
\newblock Shadowing and internal chain transitivity.
\newblock {\em Fund. Math.}, 222(3):279--287, 2013.

\bibitem{Mizera}
Ivan Mizera.
\newblock Generic properties of one-dimensional dynamical systems.
\newblock In {\em Ergodic theory and related topics, {III} ({G}\"ustrow,
  1990)}, volume 1514 of {\em Lecture Notes in Math.}, pages 163--173.
  Springer, Berlin, 1992.

\bibitem{Nadler}
Sam~B. Nadler, Jr.
\newblock {\em Continuum theory}, volume 158 of {\em Monographs and Textbooks
  in Pure and Applied Mathematics}.
\newblock Marcel Dekker, Inc., New York, 1992.
\newblock An introduction.

\bibitem{Odani}
Kenzi Odani.
\newblock Generic homeomorphisms have the pseudo-orbit tracing property.
\newblock {\em Proc. Amer. Math. Soc.}, 110(1):281--284, 1990.

\bibitem{Pearson}
D.~W. Pearson.
\newblock Shadowing and prediction of dynamical systems.
\newblock {\em Math. Comput. Modelling}, 34(7-8):813--820, 2001.

\bibitem{Pil}
S.~Y. Pilyugin.
\newblock {\em Shadowing in dynamical systems}, volume 1706 of {\em Lecture
  Notes in Mathematics}.
\newblock Springer-Verlag, Berlin, 1999.

\bibitem{Pilyugin-Plam}
S.~Yu. Pilyugin and O.~B. Plamenevskaya.
\newblock Shadowing is generic.
\newblock {\em Topology Appl.}, 97(3):253--266, 1999.

\bibitem{robinson-stability}
Clark Robinson.
\newblock Stability theorems and hyperbolicity in dynamical systems.
\newblock In {\em Proceedings of the {R}egional {C}onference on the
  {A}pplication of {T}opological {M}ethods in {D}ifferential {E}quations
  ({B}oulder, {C}olo., 1976)}, volume~7, pages 425--437, 1977.

\bibitem{walters}
P.~Walters.
\newblock On the pseudo-orbit tracing property and its relationship to
  stability.
\newblock In {\em The structure of attractors in dynamical systems ({P}roc.
  {C}onf., {N}orth {D}akota {S}tate {U}niv., {F}argo, {N}.{D}., 1977)}, volume
  668 of {\em Lecture Notes in Math.}, pages 231--244. Springer, Berlin, 1978.

\bibitem{Yano}
Koichi Yano.
\newblock Generic homeomorphisms of {$S^1$} have the pseudo-orbit tracing
  property.
\newblock {\em J. Fac. Sci. Univ. Tokyo Sect. IA Math.}, 34(1):51--55, 1987.

\end{thebibliography}

\end{document}